\newtheorem{theorem}{Theorem}[section]
\newtheorem{lemma}[theorem]{Lemma}
\newcommand{\anote}[1]{\textcolor{blue}{(Akbar: #1)}}
\newcommand{\ynote}[1]{\textcolor{red}{(Yifan: #1)}}
\def\e{\mathsf{e}}
\def\T{\mathfrak{T}}
\def\SQ{\mathcal{Q}}
\def\G{\mathcal{G}}
\def\Er{\mathfrak{Er}}
\def\nm{\mathsf{NM}}
\def\MM{\mathsf{M}}
\def\M{\mathcal{M}}
\def\C{\mathfrak{C}}
\newcommand{\CCase}[2]{\noindent {\bf Case #1:} \emph{#2}}
\title[Yifan Jing and Akbar Rafiey]{Counting maximal near perfect matchings in  quasirandom and dense graphs}
\author{Yifan Jing}
\address{%
Department of Mathematics\\
University of Illinois at Urbana-Champaign\\
Urbana, IL, USA}
\email{yifanjing17@gmail.com}
\author{Akbar Rafiey}
\address{%
Department of Computing Science\\
Simon Fraser University\\
Burnaby, BC, Canada}
\email{arafiey@sfu.ca}
\begin{document}
\maketitle
\begin{abstract}

A \emph{maximal $\varepsilon$-near perfect matching} is a maximal matching which covers at least $(1-\varepsilon)|V(G)|$ vertices. In this paper, we study the number of maximal near perfect matchings in generalized quasirandom and dense graphs. We provide tight lower and upper bounds on the number of $\varepsilon$-near perfect matchings in generalized quasirandom graphs. Moreover, based on these results, we provide a deterministic polynomial time algorithm that for a given dense graph $G$ of order $n$ and a real number $\varepsilon>0$, returns either a conclusion that $G$ has no $\varepsilon$-near perfect matching, or a positive non-trivial number $\ell$ such that the number of maximal $\varepsilon$-near perfect matchings in $G$ is at least $n^{\ell n}$. Our algorithm uses algorithmic version of Szemer\'edi Regularity Lemma, and has $O(f(\varepsilon)n^{5/2})$ time complexity. Here $f(\cdot)$ is an explicit function depending only on~$\varepsilon$.

\end{abstract}

\keywords{Keywords: maximal matching, perfect matching, quasirandom graph, regularity}

\subjclass{MSC numbers: 05C70, 05C80, 05C85}

\section{Introduction}
For a simple graph $G=(V,E)$, a \emph{matching} $\M$ of $G$ is a subset of $E(G)$ such that the edges in $\M$ do not have common end vertices. We say $\M$ is a \emph{perfect matching} if $|\M|=|V(G)|/2$. The problem of computing the total number of perfect matchings in a graph, has been extensively studied by mathematicians and computer scientists. It is known that the number of perfect matchings in a bipartite graph is equivalent to the permanent of its adjacency matrix. See \cite{bapat2007recent} for a recent survey on several theorems and open problems on permanent
of matrices and its algebraic properties. The evaluation of the permanent has attracted the attention of researchers for almost two centuries, however,  despite many attempts, an efficient algorithm for general matrices has proved elusive. Indeed, Ryser's algorithm \cite{ryser1963combinatorial} remains the most efficient for computing the permanent exactly, even though it
uses as many as $\Theta(n2^n)$ arithmetic operations. A notable breakthrough was achieved
about 60 years ago with the publication of Kasteleyn's algorithm for counting
perfect matchings in planar graphs \cite{kasteleyn1961statistics}, which uses just $O(n^3)$ arithmetic
operations. 

It turns out that computing the number of perfect matchings in a bipartite graph (computing permanent of a $\{0,1\}$-matrix) falls into the $\#$P-complete complexity class \cite{valiant1979complexity}, and thus, modulo a basic complexity theoretic conjecture, cannot be solved (exactly) in polynomial time. This remains true even for 3-regular bipartite graphs~\cite{dagum1992approximating}, and for bipartite graphs with minimum vertex degree at least $n/2$~\cite{broder1986hard}. Using the so-called Pfaffian orientations, the perfect matchings in a planar graph can be counted in polynomial time \cite{fisher1961statistical,kasteleyn1961statistics,temperley1961dimer}. A generalization of this approach yields a polynomial time algorithm for graphs of bounded genus \cite{galluccio1999theory,tesler2000matchings}. Furthermore, we can count the perfect matchings in a graph of bounded treewidth \cite{arnborg1991easy}. Basically, most of the
positive results are concerned with sparse graphs. For other
graph classes, less is known, but $\#$P-completeness is known for chordal and chordal bipartite graphs~\cite{okamoto2009counting}.

Ever since the introduction of the $\#P$ complexity class by Valiant \cite{valiant1979complexity}, the focus on these problems shifted to finding approximate solutions. Jerrum,
Sinclair, and Vigoda \cite{jerrum2004polynomial} in a breakthrough obtained a fully polynomial time randomized approximation scheme (FPRAS) for the permanent of matrices with nonnegative entries. In other words, they designed a randomized algorithm that for any given  $\varepsilon > 0$, outputs a $1 + \varepsilon$ multiplicative approximation
of the permanent, in time polynomial in $n$ and $1/\varepsilon$. This approach focuses on rapidly mixing Markov chains to obtain appropriate random samples. Many
randomized approximation schemes for various counting problems were derived in this way -- see e.g.,
\cite{jerrum2003counting,jerrum1996markov,sinclair2012algorithms} for several nontrivial applications. Unfortunately, Jerrum, Sinclair and Vigoda's \cite{jerrum2004polynomial} approach seems too complicated to be used in practice and
the approach does not appear to extend to nonbipartite graphs, since odd cycles are problematic. For this reason, a simpler Markov chain was proposed in \cite{diaconis2001statistical,dyer2017switch}. In \cite{dyer2017switch}, counting all perfect matchings in some particular classes of bipartite graphs was examined. Recently, Dyer and M$\ddot{\text{u}}$ller~\cite{dyer2017counting} extended the analyses in \cite{dyer2017switch} to hereditary classes of nonbipartite graphs.

There are only a few results concerning approximately counting perfect matchings in general graphs. Jerrum and Sinclair~\cite{jerrum1989approximating} considered this problem in general graphs. Their Markov chain method requires exponential time complexity for general graphs. More precisely, their method requires time polynomial in the ratio of number of near perfect matchings and number of perfect matchings, which may be exponential in the size of graph. This condition is satisfied for graphs with $2n$ vertices and minimum degree at least $n$, therefore providing a FPRAS for this class of graphs. There have been other approaches to tackle the problem. Chien \cite{chien2004determinant} presents a determinant-based algorithm for the number of perfect matchings in a general graph. His estimator requires $O(\varepsilon^{-2}3^{n/2})$ trials to obtain a $(1\pm \varepsilon)$-approximation of the correct
value with high probability on a graph with $2n$ vertices, and a polynomial number $(O(\varepsilon^{-2}n\omega(n))$ of trials on random graphs, where $\omega(n)$ is any function tending to infinity. Refer to~\cite{furer2005approximately} for a simpler algorithm with experimental results. 

There are results concerning counting total number of matchings (not only perfect matchings) in graphs and random graphs. Vadhan in \cite{vadhan2001complexity} showed that the problems of counting matchings remain hard when restricted to planar bipartite graphs of bounded degree or regular graphs of constant degree. Therefore, approximating this number has been studied by researchers. For example, Bayati \textit{et al.}~\cite{bayati2007simple} construct a deterministic fully polynomial time approximation scheme (FPTAS) for computing
the total number of matchings in a bounded degree graph. Additionally, for an arbitrary graph, they construct a deterministic algorithm for computing approximately the number of matchings within running time $exp(O(\sqrt{n}\log^2n))$, where $n$ is the number of vertices. Patel and Regts \cite{PR17} recently provided an alternative deterministic algorithm to approximately count matchings in bounded degree graphs. This is the same result as in \cite{bayati2007simple}, using a completely different method.
Zdeborov\'{a} and M\'{e}zard \cite{zdeborova2006number} considered this problem on sparse random graphs, in fact, their result is the computation of the entropy, i.e. the leading order of the logarithm of the number of solutions, of matchings with a given size.

In terms of lower bounds, Schrijver \cite{schrijver1998counting} shows that any $d$-regular bipartite graph with $2n$ vertices has at least\[\Big(\frac{(d-1)^{d-1}}{d^{d-2}}\Big)^n\]
perfect matchings. More generally, let $m_k(G)$ denote number of matchings of size $k$ in graph $G$. Friedland, Krop and Markstr$\ddot{\text{o}}$m \cite{friedland2008number} conjectured the following lower bound on $m_k(G)$ where $G$ is a $d$-regular bipartite graph
\[m_k(G) \geq \binom{n}{k}^2\Big(\frac{d-n/k}{d}\Big)^{n(d-n/k)}(dn/k)^{n^2/k}
\]
The conjecture was proved in \cite{csikvari2014lower} and extended to irregular bipartite graphs in \cite{lelarge2017counting}. 

Given the difficulty of counting number of perfect matchings, in particular beyond bipartite graphs, we turn our attention to near perfect matchings. In this paper, we focus on counting the number of maximal near perfect matchings in graphs. A matching $\M$ in $G$ is \emph{maximal} if the graph induced by the vertices which are not in $\M$ is empty. Counting maximal matchings is $\#P$-complete even in bipartite graphs with maximum degree five~\cite{vadhan2001complexity}. To the best of our knowledge there is no result concerning approximating the number maximal matchings. A \emph{maximal $\varepsilon$-near perfect matching} is a maximal matching that covers at least $(1-\varepsilon)|V(G)|$ vertices. Let $\nm(G,\varepsilon)$ denote the number of maximal $\varepsilon$-near perfect matchings in graph $G$. Our first result is an approximation on the number of near perfect matchings in $\varepsilon$-regular graphs.

\begin{theorem}\label{thm:main}
Given $\varepsilon>0$ and a bipartite $\varepsilon$-regular graph $G$ with density $p$. Then there exists $n_0=n_0(\varepsilon,p)$, such that for $|V(G)|=2n>n_0$, we have
\[
(1-3\sqrt\varepsilon)n\log pn\leq \log\nm(G,\sqrt\varepsilon)\leq (1+3\sqrt\varepsilon)n\log pn.
\]
\end{theorem}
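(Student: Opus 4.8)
The plan is to first reduce the problem to counting \emph{all} maximal matchings, ignoring their size. The key observation I would establish at the outset is that in an $\varepsilon$-regular pair every maximal matching is automatically $\sqrt\varepsilon$-near perfect. Indeed, a matching of size $k$ leaves exactly $n-k$ unmatched vertices $A'\subseteq A$ and $B'\subseteq B$ on each side, and maximality forces the bipartite graph between $A'$ and $B'$ to contain no edge. If both $|A'|,|B'|\ge \varepsilon n$, regularity would force density at least $p-\varepsilon>0$ between them, a contradiction; hence $n-k<\varepsilon n$, so every maximal matching has size exceeding $(1-\varepsilon)n$ and covers more than $(1-\varepsilon)2n\ge(1-\sqrt\varepsilon)2n$ vertices. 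Thus $\nm(G,\sqrt\varepsilon)$ equals the total number of maximal matchings, and it suffices to bound that quantity. This single observation disposes of both the maximality and the near-perfectness bookkeeping at once.

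For the upper bound I would bound the number $M_k$ of matchings of each admissible size $k$ and sum over $k$ from $(1-\sqrt\varepsilon)n$ to $n$. Choosing the matched set $S\subseteq A$ costs a factor $\binom{n}{k}$, and the number of ways to inject $S$ into $B$ along edges is at most $\prod_{v\in S}\deg(v)$. Using regularity with $Y=B$, all but at most $\varepsilon n$ vertices of $A$ have degree at most $(p+\varepsilon)n$, while the exceptional ones contribute at most $n$ each, giving $M_k\le \binom{n}{k}\,n^{\varepsilon n}\big((p+\varepsilon)n\big)^{k}$. Taking logarithms, the dominant term is $k\log((p+\varepsilon)n)\le n\log(pn)+O(n\varepsilon/p)$, while $\log\binom{n}{k}=nH(k/n)=O(\sqrt\varepsilon\, n\log(1/\varepsilon))$ and the exceptional factor contributes $\varepsilon n\log n$. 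Since $\varepsilon<3\sqrt\varepsilon$ and, for $n$ large in terms of $\varepsilon$ and $p$, the slack $3\sqrt\varepsilon\,n\log(pn)$ swallows all lower-order terms, the bound $\log\nm(G,\sqrt\varepsilon)\le(1+3\sqrt\varepsilon)n\log pn$ follows.

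For the lower bound I would first count matchings of the single size $k_0=\lceil(1-\varepsilon)n\rceil$ by a greedy process. Having already placed $i$ disjoint edges, the two unmatched sides each have size $n-i\ge \varepsilon n$ for every $i<k_0$, so regularity guarantees at least $(p-\varepsilon)(n-i)^2$ available edges; multiplying these choices and dividing by $k_0!$ to pass from ordered to unordered matchings yields $M_{k_0}\ge \frac{(p-\varepsilon)^{k_0}}{k_0!}\big(n!/(n-k_0)!\big)^{2}$. A Stirling estimate shows $\log M_{k_0}\ge (1-\varepsilon)n\log n+(1-\varepsilon)n\log(p-\varepsilon)+\varepsilon n\log(1/\varepsilon)-O(n)$, whose leading behavior is $(1-\varepsilon)n\log(pn)$, comfortably above the target. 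To convert this into a count of \emph{maximal} matchings I would map each size-$k_0$ matching to some maximal extension of it; since a maximal matching of size $m$ contains exactly $\binom{m}{k_0}\le\binom{n}{k_0}$ sub-matchings of size $k_0$, every fibre of this map has size at most $\binom{n}{k_0}$, so $\nm(G,\sqrt\varepsilon)\ge M_{k_0}/\binom{n}{k_0}$. As $\log\binom{n}{k_0}=nH(\varepsilon)=O(\varepsilon n\log(1/\varepsilon))$ is negligible against the margin $(3\sqrt\varepsilon-\varepsilon)n\log n$, this gives $\log\nm(G,\sqrt\varepsilon)\ge(1-3\sqrt\varepsilon)n\log pn$ for $n\ge n_0(\varepsilon,p)$.

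The main obstacle I anticipate is the lower bound, and specifically avoiding naive overcounting: the greedy process most naturally produces \emph{ordered} matchings, and dividing by the full $n!$ of orderings would destroy the leading $n\log n$ term. The projection-to-a-maximal-extension argument sidesteps this precisely because its fibre bound $\binom{n}{k_0}$ has size only $e^{O(\varepsilon n\log(1/\varepsilon))}$, which the $\sqrt\varepsilon\,n\log(pn)$ margin built into the exponents can absorb. The remaining work is pure bookkeeping: verifying that every error term is $o(\sqrt\varepsilon\,n\log n)$ once $n$ exceeds a threshold depending on $\varepsilon$ and $p$ (this is the role of $n_0(\varepsilon,p)$), and checking the harmless standing hypotheses $p>\varepsilon$ and $pn\to\infty$ so that $p-\varepsilon>0$ and $\log(pn)>0$.
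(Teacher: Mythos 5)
Your proposal is correct, and it takes a genuinely different route from the paper's. The paper proves the lower bound with a multi-phase greedy procedure (Algorithm~\ref{alg-quasi-bi}): typical vertices (Lemma~\ref{lem:reg}) are matched one at a time while the degree thresholds are recalibrated phase by phase, $\nm$ is bounded below by the product of per-step neighbor counts, and that product is estimated by the Stirling-type computation in Lemma~\ref{lem:a1}; its upper bound is the degree product $\big((1+\delta)pn\big)^{(1-2\sqrt\varepsilon)n}n^{2\sqrt\varepsilon n}$. You instead begin with a structural observation the paper never makes: by maximality the two sets of unmatched vertices span no edge, so by regularity (given $p>\varepsilon$) each has size less than $\varepsilon n$; hence every maximal matching is automatically $\sqrt\varepsilon$-near perfect and $\nm(G,\sqrt\varepsilon)$ equals the total number of maximal matchings. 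Your lower bound then counts ordered edge-sequences of the single size $k_0=\lceil(1-\varepsilon)n\rceil$ (regularity supplies at least $(p-\varepsilon)(n-i)^2$ choices at step $i$), divides by $k_0!$, and passes to maximal matchings by mapping each size-$k_0$ matching to a maximal extension, whose fibres have size at most $\binom{n}{k_0}=e^{O(\varepsilon n\log(1/\varepsilon))}$. This buys two things: your argument is more self-contained (no phase bookkeeping, no appendix computation), and, more substantively, it repairs a point the paper elides --- the matchings produced by the paper's greedy are $\sqrt\varepsilon$-near perfect but not necessarily \emph{maximal}, and the paper never accounts for the extension-to-maximal step (distinct greedy outputs can extend to the same maximal matching); your fibre bound is exactly the missing bookkeeping, and it is affordable because the margin $\sqrt\varepsilon\, n\log pn$ dwarfs $\varepsilon n\log(1/\varepsilon)$. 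Your standing hypothesis $p>\varepsilon$ costs nothing relative to the paper, which implicitly needs $p>\sqrt\varepsilon$ so that $\delta=\sqrt\varepsilon/p<1$; some such assumption is unavoidable, since for $p\le\varepsilon$ an empty bipartite graph is $\varepsilon$-regular with density $p$ and has no near perfect matching at all. What the paper's heavier machinery buys in exchange is an explicit counting algorithm whose degree-tracking formulation carries over directly to the non-bipartite quasirandom setting (Theorem~\ref{thm:1q}) and to the dense-graph algorithm of Section~4.
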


Let $P$ be a symmetric $m\times m$-matrix, such that $0\leq p_{i,j}\leq 1$, where $p_{i,j}$ is the $(i,j)$-entry of $P$. We define generalized quasirandom graphs as follows. A graph $G\in\SQ(n^{(m)},P,\varepsilon)$ if $V(G)=\bigsqcup_{i=1}^m V_i$ with $|V_1|=\dots=|V_m|=n$, and for every $i\neq j$, $(V_i,V_j)$ is $\varepsilon$-regular with density $p_{ij}$, and $G[V_i]$ is $\varepsilon$-close (in the sense of cut metric) to a random graph $\G(n,p_i)$ for every $i$. Here $p_{i,j}$ is the $(i,j)$-entry of $P$, and $p_i$ is $(i,i)$-entry of $P$. 

Given $G\in \SQ(n^{(m)},P,\varepsilon)$, define the \emph{quotient graph} $H=G/m$ as a weighted graph, such that $V(H)=[m]$, and the edge weight $u(ij)=p_{ij}$, the vertex weight $u(i)=p_i$. We let $ij\in E(H)$ if $p_{ij}\neq 0$. Let $w:[m]^2\to [0,1]$ be a function, we consider the following linear equations on $H$.
\begin{equation}\label{eq:5}
\sum_{1\leq j\leq m,\, ij\in E(H)}w(ij)=1\qquad\text{ for every }1\leq i\leq m,
\end{equation}

We have the following result on the number of maximal near perfect matchings in generalized quasirandom graphs.

\begin{theorem}\label{thm:multimatching}
Suppose we have an integer $m\geq2$, and a $m\times m$-matrix $P$. Then there exists $n_0>0$ and $c>0$, such that if $n>n_0$ and $\varepsilon<c$, for every  graph $G\in\SQ(n^{(m)},P,\varepsilon)$, let $H$ be the quotient graph of $G$, we have
\begin{enumerate}
\item If the linear system (\ref{eq:5}) of $H$ does not have any solution, $G$ does not have maximal $\sqrt\varepsilon$-near perfect matchings.
\item If the linear system (\ref{eq:5}) of $H$ has solutions, then
\[
(1-4\sqrt\varepsilon)\frac{m}{2}n\log n\leq \log\nm(G,\sqrt\varepsilon)\leq(1+7\sqrt\varepsilon)\frac{m}{2}n\log n.
\]
\end{enumerate}
\end{theorem}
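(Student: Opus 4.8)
The plan is to reduce everything to the bipartite count of Theorem~\ref{thm:main} by decomposing a near perfect matching of $G$ along the parts $V_1,\dots,V_m$ and reading feasibility off the linear system~(\ref{eq:5}). For Part~(1) I would argue by contraposition: suppose $G$ admits a maximal $\sqrt\varepsilon$-near perfect matching $\M$. Writing $x_{ij}$ for the number of edges of $\M$ between $V_i$ and $V_j$, and $x_{ii}$ for twice the number of edges of $\M$ inside $V_i$, these counts are symmetric, $x_{ij}=x_{ji}$, and $w(ij):=x_{ij}/n$ satisfies $\sum_j w(ij)=1-u_i/n$, where $u_i$ is the number of $\M$-uncovered vertices of $V_i$. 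Since $\M$ covers at least $(1-\sqrt\varepsilon)mn$ vertices, $\sum_i u_i\le\sqrt\varepsilon mn$, so $w$ satisfies~(\ref{eq:5}) up to an additive error of $\sqrt\varepsilon m$ per equation. If~(\ref{eq:5}) were infeasible then, by Farkas' lemma, there is a separating certificate, and strict infeasibility is stable under small perturbations of the right-hand side: there is a threshold $\delta_0=\delta_0(H)>0$ below which even the relaxed system $\sum_j w(ij)\in[1-\delta,1+\delta]$, $0\le w\le1$, is infeasible. Choosing $c$ so that $\sqrt{c}\,m<\delta_0$ produces a contradiction, proving Part~(1).

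\textbf{Decomposition.} For the feasible case I would set up a correspondence, exact up to sub-exponential factors, between maximal $\sqrt\varepsilon$-near perfect matchings and tuples consisting of: a symmetric integer array $(x_{ij})$ with row sums in $[(1-\sqrt\varepsilon m)n,\,n]$; for each pair $i<j$ a near perfect matching of the $\varepsilon$-regular bipartite graph between chosen $x_{ij}$-subsets of $V_i$ and $V_j$; and for each $i$ an internal near perfect matching of $G[V_i]$ on the remaining $x_{ii}$ vertices. The number of admissible arrays and subset choices is at most $n^{O(m^2)}=\exp(O(m^2\log n))$, negligible against the target $\tfrac m2 n\log n$, so in the logarithm the count is governed by the product of the blockwise matching counts (maximized over $(x_{ij})$ for the lower bound, bounded for the upper bound). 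Maximality costs nothing at leading order: any near perfect matching extends greedily to a maximal one without losing coverage, and maximal $\sqrt\varepsilon$-near perfect matchings form a subset of all matchings covering at least $(1-\sqrt\varepsilon)mn$ vertices, which gives two-sided control.

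\textbf{The count.} Applying Theorem~\ref{thm:main} to each bipartite block with $w(ij)n$ vertices per side gives $(1\pm3\sqrt\varepsilon)\,w(ij)n\log(p_{ij}w(ij)n)$ for the logarithm of its matching count, and the analogous single-graph estimate for the diagonal block $G[V_i]$ on $w(ii)n$ vertices gives $\tfrac12(1\pm O(\sqrt\varepsilon))\,w(ii)n\log(p_iw(ii)n)$. The leading terms are $w(ij)n\log n$ and $\tfrac12 w(ii)n\log n$, the $O(n)$ corrections being absorbed into the $\sqrt\varepsilon$-slack once $n>n_0(\varepsilon)$. Summing and using that $\sum_i\sum_j w(ij)=m$ with $w$ symmetric yields $\sum_{i<j}w(ij)+\tfrac12\sum_i w(ii)=\tfrac m2$, hence
\[
\log\nm(G,\sqrt\varepsilon)=\Big(\sum_{i<j}w(ij)+\tfrac12\sum_i w(ii)\Big)n\log n\,(1\pm O(\sqrt\varepsilon))=\tfrac m2 n\log n\,(1\pm O(\sqrt\varepsilon)),
\]
with the constants tracked to $-4\sqrt\varepsilon$ below and $+7\sqrt\varepsilon$ above.

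\textbf{Main obstacle.} The single ingredient not supplied by Theorem~\ref{thm:main} is the count of near perfect matchings inside the diagonal blocks $G[V_i]\approx\G(n,p_i)$, which are non-bipartite; I expect this to be the technical crux and would establish a single-graph analogue of Theorem~\ref{thm:main} (e.g.\ by a second-moment or entropy computation for $\G(n,p)$, yielding $\log\approx\tfrac N2\log(pN)$ on $N$ vertices). The second delicate point is the bookkeeping: one must check that the coupling between blocks (a vertex matched internally cannot also be matched across) and the redistribution of the at most $\sqrt\varepsilon mn$ uncovered vertices perturb the exponent by only $O(\sqrt\varepsilon)$, which is precisely what forces the explicit $4\sqrt\varepsilon$ and $7\sqrt\varepsilon$ constants.
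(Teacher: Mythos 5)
Your proposal follows essentially the same route as the paper's proof. For Part (1), the paper also argues by contraposition: a maximal $\sqrt\varepsilon$-near perfect matching induces weights $w$ with $\Er(H,w)<\sqrt\varepsilon$, while Lemma~\ref{lem:er} shows that infeasibility of~(\ref{eq:5}) forces $\Er(H)\geq c(m)$ (the paper gets a constant depending only on $m$ by noting that only finitely many constraint matrices arise for fixed $m$; your Farkas/compactness threshold $\delta_0(H)$ is equally good, since $H$ is determined by $m$ and $P$). For Part (2), the paper likewise partitions each $V_i$ into parts $V_{i,j}$ of sizes $w(ij)n$, multiplies blockwise counts, and handles the underdetermined case by discretizing the free variables into $1/\sqrt\varepsilon$ values (its Cases 2 and 3); your single parametrization by integer arrays $(x_{ij})$, maximized for the lower bound and summed for the upper bound, is an equivalent bookkeeping.

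Two concrete corrections, plus a caveat. First, your ``main obstacle'' is not actually open: the paper supplies the single-graph count as Theorem~\ref{thm:1q}, proved by the same greedy argument as Theorem~\ref{thm:main}. Your proposed substitute --- a second-moment or entropy computation for $\G(n,p_i)$ --- would not suffice as stated, because $G[V_i]$ is only assumed $(\varepsilon,p_i)$-quasirandom, a deterministic hypothesis; statements holding with high probability in the random model do not transfer to an arbitrary quasirandom graph, so the deterministic greedy/regularity argument is what is needed. Second, your claim that the ``subset choices'' number at most $n^{O(m^2)}$ is false: partitioning $V_i$ into labelled parts of sizes $w(ij)n$ contributes the multinomial factor $N(V_i)=n!/\prod_j(w(ij)n)!$, which is exponential in $n$, not polynomial. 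This does not break your argument, since $e^{O(n)}$ is still negligible against $\frac{m}{2}n\log n$ once $n>n_0(\varepsilon,m)$ (indeed the paper keeps $\prod_i N(V_i)$ as a beneficial factor in its lower bound), but the bound as you state it is wrong and the negligibility claim must be rerun with the exponential estimate. Finally, a gap you share with the paper rather than introduce: both arguments silently assume that a near perfect matching places few edges in pairs with $p_{ij}=0$, i.e.\ that the induced $w$ restricted to $E(H)$ still has row sums near $1$. This does not follow from $\varepsilon$-regularity with density $0$ --- such a pair can contain a perfect matching and remain $\varepsilon$-regular with density $0$ for large $n$ --- an issue the paper only confronts in Section~4, where maximum matchings of low-density and irregular pairs are computed explicitly via the quantities $r_{ij}$.
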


Based on the algorithmic version of Szemer\'edi regularity lemma and the results we obtain for quasirandom graphs, we provide a deterministic polynomial-time algorithm on approximating the number of maximal near perfect matchings in dense graphs. A graph $G$ is called \emph{dense} if $|E(G)|\geq\alpha|V(G)|^2$ for some fixed $\alpha$. Given a dense graph $G$, our  algorithm provide a non-trivial lower bound on the number of maximal near perfect matchings in $G$.
\medskip

\begin{quote}
	{\sc Number of Max Near Perfect Matchings Dense}.\\
	\emph{Input:} A graph $G$ of order $n$ and a real number $\varepsilon>0$.\\
	\emph{Output:} Either a conclusion that $G$ does not contain a maximal $\varepsilon$-near perfect matching, or a (non-trivial) real number $\ell$ such that $\nm(G,\varepsilon)>n^{\ell n}$.
\end{quote}
\medskip

In particular, the lower bound is obtained by the following theorem.

\begin{theorem}\label{thm:1.7}
Let $G$ be a dense graph on $n$ vertices. Then
\[
\log\nm(G,\sqrt\varepsilon)\geq(1-4\sqrt\varepsilon)\sup_{w(\mathbf{e})\in\mathfrak{S}}\sum_{e\in E_4}\frac{w(e)n}{K}\log\frac{w(e)n}{K}+\sum_{e\in E_3}\frac{w(e)n}{K}\log p_e\frac{w(e)n}{K}.
\]
\end{theorem}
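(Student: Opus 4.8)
The plan is to reduce the dense graph $G$ to a generalized quasirandom graph by means of the algorithmic Szemer\'edi regularity lemma, and then to invoke Theorem~\ref{thm:multimatching} together with its bipartite building block Theorem~\ref{thm:main} on each piece of the resulting partition. First I would run the algorithmic version of the regularity lemma on $G$ to produce an $\varepsilon$-regular partition $V(G)=V_0\cup V_1\cup\dots\cup V_K$, where $V_0$ is a negligible exceptional set and $V_1,\dots,V_K$ are clusters of common size $n/K$. From this partition I build a reduced weighted graph on vertex set $[K]$: a pair $ij$ is kept as an edge precisely when $(V_i,V_j)$ is $\varepsilon$-regular of density $p_{ij}$ bounded away from $0$, with $p_{ij}$ recorded as the edge weight, while the within-cluster density $p_i$ is recorded as the weight of vertex $i$. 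After discarding the few irregular and sparse pairs, the union of the retained clusters and pairs is $\varepsilon'$-close to a graph in $\SQ((n/K)^{(K)},P,\varepsilon')$, so that the quasirandom counting estimates apply.

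I would then read off the feasibility set $\mathfrak{S}$: a weight function $w(\mathbf e)\in\mathfrak S$ is a solution of the scaled linear system~\eqref{eq:5} on the reduced graph, which prescribes, for each edge $e$, the fraction $w(e)$ of a cluster's $n/K$ vertices to be matched along $e$, the constraints forcing every cluster to be almost fully saturated. The two edge classes correspond to the two ways matching mass is placed: the regular between-cluster pairs $e\in E_3$, along which $w(e)n/K$ vertices are matched across $(V_i,V_j)$, and the within-cluster slots $e\in E_4$, along which $w(e)n/K$ vertices are matched internally. For a fixed feasible $w$, the set of maximal $\sqrt\varepsilon$-near perfect matchings realizing this profile factorizes as a product of independent local choices, one per retained pair and cluster, because the matched portions of distinct clusters are vertex-disjoint.

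Next I would lower bound each local factor. For a within-cluster slot $e\in E_4$ carrying $w(e)n/K$ matched vertices, the number of admissible internal matchings is, to leading order, $\big(w(e)n/K\big)!$, whose logarithm is $\tfrac{w(e)n}{K}\log\tfrac{w(e)n}{K}$ by Stirling; the $\sqrt\varepsilon$-closeness to $\G(n/K,p_i)$ guarantees that enough internal edges are present to support such matchings, with the density and the factor-of-two both relegated to lower order. For a between-cluster pair $e\in E_3$ of density $p_e$, Theorem~\ref{thm:main} applied to the $\varepsilon$-regular bipartite graph $(V_i,V_j)$ restricted to $w(e)n/K$ vertices per side yields a logarithmic count at least $(1-3\sqrt\varepsilon)\tfrac{w(e)n}{K}\log\big(p_e\tfrac{w(e)n}{K}\big)$. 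Taking the logarithm of the product of local factors turns it into the displayed sum over $E_3$ and $E_4$, and the global prefactor $(1-4\sqrt\varepsilon)$ absorbs the per-factor $(1-3\sqrt\varepsilon)$ losses, the $\log$-order contribution of the exceptional set $V_0$, and the $\sqrt\varepsilon n$ uncovered vertices. Optimizing over all feasible $w\in\mathfrak S$ produces the supremum, giving the stated lower bound.

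The main obstacle I anticipate is the local estimate for the between-cluster pairs carried out simultaneously with the maximality requirement: a maximal matching must leave no edge inside the uncovered vertex set, so the local counts are not genuinely independent, and I must verify that any profile coming from a feasible $w$ extends to a bona fide maximal $\sqrt\varepsilon$-near perfect matching without losing more than a $\sqrt\varepsilon$ fraction of the estimate. Equally delicate is controlling the accumulation of regularity error across all $\binom{K}{2}$ pairs and checking that the total loss remains within the claimed $(1-4\sqrt\varepsilon)$ factor uniformly in $n$.
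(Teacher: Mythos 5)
Your high-level skeleton (regularity partition, quasirandom counting on the pieces, optimization over matching profiles) is the same as the paper's, but your blind reconstruction of the objects $E_3$, $E_4$, and $\mathfrak{S}$ does not match their actual definitions in Section~4, and with your definitions the key estimates fail. In the paper, \emph{both} $E_3$ and $E_4$ consist of between-cluster $\varepsilon$-regular pairs; they are split by density thresholds: $E_3$ are the pairs with density in $[n^{\sqrt\varepsilon-1},n^{-\sqrt\varepsilon}]$ and $E_4$ those with density at least $n^{-\sqrt\varepsilon}$. That is why the $E_4$ terms carry no $p_e$: for such pairs $\log p_e\geq-\sqrt\varepsilon\log n$, so the density loss is absorbed into the $(1-4\sqrt\varepsilon)$ prefactor, while Theorem~\ref{thm:main} is applied to both classes. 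Your reading of $E_4$ as within-cluster slots is inconsistent with the very formula you are proving: by Theorem~\ref{thm:1q}, the log-count of matchings covering $m$ vertices \emph{inside} a cluster is about $\tfrac12 m\log(p_i m)$, so your claimed local factor $\bigl(w(e)n/K\bigr)!$ overcounts by a factor of $2$ in the leading term --- the factor of two is not ``relegated to lower order,'' and neither is the internal density $p_i$ unless it is itself at least $n^{-\sqrt\varepsilon}$.

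The more serious gap concerns $\mathfrak{S}$. In the paper it is not the solution set of the linear system \eqref{eq:5}; it is the feasible region of the system of \emph{inequalities} \eqref{eq:2}, whose right-hand sides involve $r_i$ and $r_{ij}$, the scaled sizes of maximum matchings inside each $G[V_i]$ and across each irregular pair, computed exactly by the Micali--Vazirani algorithm (Theorem~\ref{thm:maxmatching}). This is the crux of Section~4 and the reason the dense case is not a formal consequence of Theorem~\ref{thm:multimatching}: in a dense graph, irregular pairs and within-cluster graphs can carry large matchings that are indispensable for \emph{covering} vertices, even though regularity gives no way to count their multiplicity. The paper therefore uses them only as coverage constraints (via $r_i$, $r_{ij}$, and a $\sqrt\varepsilon$ slack) and counts only matchings placed on $E_3\cup E_4$, i.e.\ maximal matchings of the pruned graph $Q$ that extend to $\sqrt\varepsilon$-near perfect matchings of $G$; the count of those is bounded below by $s$ in \eqref{eq:3} up to the $(1-4\sqrt\varepsilon)$ factor. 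Your plan --- discard the irregular and sparse pairs and demand exact saturation of every cluster through the retained regular pairs via \eqref{eq:5} --- breaks the argument in both directions: a profile feasible for \eqref{eq:2} need not be feasible for your system (so your supremum is over a different, possibly empty, set and does not bound the quantity in the statement), and the matchings you count need not extend to maximal near perfect matchings of $G$, because covering the remaining vertices may hinge precisely on the within-cluster and irregular-pair edges you discarded. The extendability/maximality difficulty you flag at the end is exactly this issue, and it is resolved by \eqref{eq:2}, not by \eqref{eq:5}.
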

The value of $w$ and the set $\mathfrak{S}$ are determined by a linear programming, and the values of $K,p$, the sets $E_3,E_4$ are determined by the algorithm, we will discuss it in details in Section~4.

The paper is organized as follows. In the next section, we give basic definitions and properties in graph theory, and the theoretical background used in the paper. In Section 3, we discuss the matchings in generalized quasirandom graphs. In Section~4, we consider the problem for the dense graphs, and provide an approximation algorithm.

\section{Preliminaries}

We will use standard definitions and notation in graph theory. Given a graph $G$, a \emph{matching} $\M$ of $G$ is a subset of $E(G)$ such that the edges in $\M$ do not have common end vertices. We say $M$ is a \emph{perfect matching} if $|\M|=|V(G)|/2$, and $\M$ is \emph{maximal} if there does not exist another matching $\M_1\neq\M$ such that $\M\subseteq\M_1$. Given a graph $G$ and a real number $\varepsilon>0$, we say a matching $\M$ is $\varepsilon$-\emph{near perfect} if $|M|\geq(1-\varepsilon)|V(G)|/2$. Given a simple graph $G$, let $\nm(G,\varepsilon)$ be the number of maximal $\varepsilon$-near perfect matchings in $G$. We use $[n]$ to denote the set of integers $\{1,\dots,n\}$. All the logarithms in the paper are taken base $e$.

Szemer\'edi Regularity Lemma \cite{szemeredi1975regular} is one of the most powerful tools in modern graph theory. Szemer\'edi first used this lemma in his celebrated theorem on the existence of long arithmetic progressions in dense subset of integers~\cite{S}. The lemma gives us the rough structure of dense graphs. Roughly speaking, Given any dense graph $G$ and the error $\varepsilon>0$, one can partition the vertex set of $G$ into constant (only depending on $\varepsilon$) parts, and the subgraph between each two parts except an $\varepsilon$ fraction performs like a random graph.  To make this precise, we need some definitions.

Given a simple graph $G$ and $X,Y\subseteq V(G)$. Let $\e(X,Y)$ be the number of edges between $X,Y$ then the \emph{edge density} between $X$ and $Y$ is defined as $d(X,Y)=\e(X,Y)/(|X||Y|)$. A pair of vertex subsets $(X,Y)$ is {\em $\varepsilon$-regular} if for all subsets $X^\prime\subseteq X$ and $Y^\prime\subseteq Y$ that satisfy $|X^\prime|\geq\varepsilon|X|$ and $|Y^\prime|\geq\varepsilon|Y|$, we have $|d(X^\prime,Y^\prime)-d(X,Y)|<\varepsilon$.  A pair of vertex set $(X,Y)$ is \emph{$\varepsilon$-regular with density $p$}, if for every $X'\subseteq X$ and $Y'\subseteq Y$ with $|X'|\geq \varepsilon |X|$ and $|Y'|\geq \varepsilon|Y|$, we have $|d(X',Y')-p|\leq \varepsilon$. Note that under this definition, the edge density between $X$ and $Y$ is not necessarily $p$.

We say a vertex partition $\mathcal{P}=\{V_1,\dots,V_K\}$ is \emph{equitable} if $\big||V_i|-|V_j|\big|\leq1$ for every $1\leq i<j\leq K$. An equitable vertex partition $\mathcal{P}$ with $K$ parts is \emph{$\varepsilon$-regular} if all but at most $\varepsilon K^2$ pairs of parts $(V_i,V_j)$ are $\varepsilon$-regular.

\begin{theorem}[Szemer\'edi Regularity Lemma \cite{szemeredi1975regular}]
For every $\varepsilon>0$ and every integer $m$, there exists an integer $M=M(m,\varepsilon)$ such that every simple graph $G$ has an $\varepsilon$-regular partition into $K$ parts, where $m\leq K\leq M$.
\end{theorem}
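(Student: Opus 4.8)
The plan is to use the standard \emph{energy-increment} (index-increment) argument. First I would attach to every vertex partition $\mathcal{P}=\{V_1,\dots,V_K\}$ of $G$ a scalar \emph{index}
\[
q(\mathcal{P})=\sum_{1\leq i,j\leq K}\frac{|V_i|\,|V_j|}{|V(G)|^2}\,d(V_i,V_j)^2,
\]
which always lies in $[0,1]$. The two facts I would establish about $q$ are: (i) \emph{monotonicity under refinement}, namely $q(\mathcal{P}')\geq q(\mathcal{P})$ whenever $\mathcal{P}'$ refines $\mathcal{P}$, which follows from convexity of $t\mapsto t^2$ (equivalently Cauchy--Schwarz applied to the within-cell densities); and (ii) a \emph{defect version}: if a single pair $(V_i,V_j)$ fails to be $\varepsilon$-regular, witnessed by $X\subseteq V_i$ and $Y\subseteq V_j$ with $|X|\geq\varepsilon|V_i|$, $|Y|\geq\varepsilon|V_j|$, and $|d(X,Y)-d(V_i,V_j)|>\varepsilon$, then splitting $V_i$ along $X$ and $V_j$ along $Y$ raises the contribution of that pair to the index by at least $\varepsilon^4\,|V_i||V_j|/|V(G)|^2$.

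Next I would iterate. Start from an arbitrary equitable partition $\mathcal{P}_0$ into $m$ parts. If $\mathcal{P}_t$ is already $\varepsilon$-regular in the sense defined above (all but at most $\varepsilon K^2$ pairs regular), stop. Otherwise more than $\varepsilon K^2$ pairs are irregular; for each such ordered pair $(i,j)$ fix a witness set $X_{ij}\subseteq V_i$, and refine each cell $V_i$ by the common refinement induced by all the $X_{ij}$ it participates in. Summing the defect estimate over the more than $\varepsilon K^2$ irregular pairs, each contributing roughly $\varepsilon^4/K^2$ to the index, yields a total increment $q(\mathcal{P}_{t+1})-q(\mathcal{P}_t)\geq \varepsilon^5/2$ (the exact constant is unimportant). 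Since $0\leq q\leq 1$, this can occur at most $2\varepsilon^{-5}$ times, so the process terminates and produces an $\varepsilon$-regular partition.

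The final ingredient is controlling the number of parts and keeping the partition equitable. Each refinement step cuts every cell into at most $2^{K}$ pieces, so the number of parts grows by an iterated exponential at each of the $\leq 2\varepsilon^{-5}$ steps; this yields the explicit (tower-type) bound $M=M(m,\varepsilon)$, and since refinements only increase the count, $K\geq m$ is automatic. To restore equitability I would, after each refinement, further chop the cells into blocks of a common size $\approx |V(G)|/K$ together with a small exceptional set; a routine computation shows this changes each density, and hence the index, by a negligible amount and costs only a small additional factor in $K$, so both the regularity and the increment bound survive.

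The main obstacle I expect is step (ii), the defect form of Cauchy--Schwarz: one must show that a purely \emph{local} failure of regularity on the subsets $X,Y$ forces a \emph{quantitative} gain in the global index. Getting the correct power of $\varepsilon$ here, and ensuring that the gains from the more than $\varepsilon K^2$ distinct irregular pairs \emph{add} rather than interfere, which is precisely why one refines every cell simultaneously by all of its witnesses, is the crux. Everything else is bookkeeping around the boundedness $q\leq 1$ and the equitability clean-up.
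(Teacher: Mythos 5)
Your proposal is the standard energy-increment argument, which is precisely the proof in the cited source \cite{szemeredi1975regular} (the paper itself states this lemma as a known result and gives no proof of its own). The key ingredients --- the index $q(\mathcal{P})$ bounded in $[0,1]$, monotonicity under refinement, the defect Cauchy--Schwarz gain of order $\varepsilon^4$ per irregular pair, the $\varepsilon^5$ increment per round forcing termination after $O(\varepsilon^{-5})$ rounds, and the tower-type bound with equitability clean-up --- are all present and correctly deployed, so the proposal is correct and takes essentially the same approach.
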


To obtain a Szemer\'edi partition, there are many known polynomial time algorithms, for example, \cite{N94}. Recently, Tao \cite{Tao10} provided a probabilistic algorithm which produces an $\varepsilon$-regular partition with high probability in constant time (depending on $\varepsilon$). In this paper, we will use a more recent deterministic PTAS due to Fox et al.~\cite{FLZ}.
\begin{theorem}[\cite{FLZ}]\label{thm:algpartition}
There exists an $O_{\varepsilon,\alpha,k}(n^2)$ time algorithm, which, given $\varepsilon>0$, and $0<\alpha<1$, an integer $k$, and a graph $G$ on $n$ vertices that admits an $\varepsilon$-Szemer\'edi partition with $k$ parts, outputs a $(1+\alpha)\varepsilon$-Szemer\'edi partition of $G$ into $k$ parts.
\end{theorem}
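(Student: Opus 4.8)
The plan is to combine the classical energy-increment proof of the regularity lemma with a constant-size sampling step, exploiting the promise that an $\varepsilon$-regular $k$-partition already exists so that the number of parts need never grow beyond $k$. The central difficulty is that deciding whether a fixed pair $(X,Y)$ is \emph{exactly} $\varepsilon$-regular is computationally intractable, so I would first replace exact testing by a one-sided approximate test: an algorithm that, given a pair, either certifies it is $(1+\alpha)\varepsilon$-regular or exhibits a witness $(X',Y')$ with $|X'|\geq\varepsilon|X|$, $|Y'|\geq\varepsilon|Y|$ and $|d(X',Y')-d(X,Y)|\geq\varepsilon$. Since the output only needs the relaxed quality $(1+\alpha)\varepsilon$, this one-sided guarantee suffices, and a witness, when it exists, can drive a refinement.

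First I would fix a uniformly random sample $S\subseteq V(G)$ of constant size $q=q(\varepsilon,\alpha,k)$. Standard concentration (Chernoff/McDiarmid) shows that, with high probability, for every partition the densities between and within the sampled classes approximate the true densities to within $o(\alpha\varepsilon)$; hence $\varepsilon$-regularity of a partition of $G$ and of the induced partition of $G[S]$ agree up to the allowed slack. Because the hypothesis supplies an $\varepsilon$-regular $k$-partition of $G$, its restriction to $S$ is a $(1+\alpha/3)\varepsilon$-regular $k$-partition of $G[S]$. As $|S|$ is constant, I can enumerate all $k^{q}$ partitions of $S$ into $k$ classes, brute-force test each in constant time, and retain one that is $(1+\alpha/2)\varepsilon$-regular on $G[S]$; the promise guarantees the search succeeds.

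Next comes the lifting step. Given the good partition $S=S_1\sqcup\cdots\sqcup S_k$, I would assign each $v\in V(G)$ to the class whose profile it matches, comparing the vector $(\e(\{v\},S_1),\dots,\e(\{v\},S_k))$ against the sampled densities; these counts read only the adjacency of $v$ to the constant-size sample, so the assignment is $O(n)$. A final rebalancing moves a negligible $o(\alpha\varepsilon n)$ vertices to make the partition equitable, which perturbs every density by $o(\alpha\varepsilon)$. Correctness is then established by analysis rather than run-time verification: if some output pair $(V_i,V_j)$ failed $(1+\alpha)\varepsilon$-regularity, its witness would pull back through the profile assignment to a density violation already visible on $S$, contradicting the choice of the $S_i$. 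Reading the dense input and tabulating the $\binom{k}{2}$ pairwise class densities costs $O_{\varepsilon,\alpha,k}(n^2)$, which dominates and matches the claimed bound.

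The main obstacle I anticipate is twofold. Analytically, the lifting lemma---that matching vertices to sample profiles converts a $(1+\alpha/2)\varepsilon$-regular partition of $G[S]$ into a $(1+\alpha)\varepsilon$-regular partition of $G$---requires tracking how subset-witnesses of irregularity transfer between $G$ and $G[S]$ in both directions at once, combined with the rigidity argument that keeps the profile classes near-balanced so the equitability repair is cheap. Algorithmically, upgrading the randomized sampling to the genuinely \emph{deterministic} PTAS of \cite{FLZ} is the subtler point: one must either derandomize by the method of conditional expectations over the constant-dimensional family of density test-functions, or replace random sampling by an explicitly constructed hitting set for that family, so that a representative $S$ is produced deterministically within the $O(n^2)$ budget.
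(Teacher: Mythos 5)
First, note that the paper contains no proof of this statement at all: it is quoted verbatim as an imported black box from~\cite{FLZ}, so the only meaningful benchmark is the argument in that reference, which proceeds quite differently from your sketch --- via deterministic $O(n^2)$ subroutines that approximately verify regularity and produce explicit irregularity witnesses (a derandomized cut-norm/weak-regularity machinery run inside the usual energy-increment scheme), rather than by reconstructing a partition of $G$ from vertex profiles over a constant-size sample. Measured on its own terms, your proposal has a genuine gap, and it sits exactly where you half-suspect it does: the lifting step. Assigning $v$ to a class by matching the degree vector $(\e(\{v\},S_1),\dots,\e(\{v\},S_k))$ is the Goldreich--Goldwasser--Ron recipe, and it is sound only for partition properties specified by \emph{pairwise densities} (max-cut, bisection, etc.). $\varepsilon$-regularity is not such a property: it quantifies over all subsets $X'\subseteq V_i$, $Y'\subseteq V_j$ of linear size, and two vertices with identical profiles into $S$ can have essentially disjoint neighborhoods in $V\setminus S$. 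Consequently a profile-based misassignment can create a pair whose irregularity witness $(X',Y')$ leaves no trace whatsoever on the constant-size sample, and your claimed ``pull-back'' of witnesses to a violation visible on $S$ --- the single sentence carrying the entire correctness burden --- is unsupported and false in general. This is precisely why constructing Szemer\'edi partitions from samples required the substantially heavier machinery of Alon--Fischer--Newman--Shapira and Fischer--Matsliah--Shapira instead of the GGR profile method.

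Two further steps are asserted at a strength you have not earned. The claim that ``Chernoff/McDiarmid'' shows a constant-size sample preserves, for every relevant partition, all densities to within $o(\alpha\varepsilon)$ is not a routine concentration bound: even the one-directional statement that the restriction of the promised $\varepsilon$-regular partition to a random constant-size $S$ is regular with slightly degraded parameters is a nontrivial sampling lemma (of the Borgs--Chayes--Lov\'asz--S\'os--Vesztergombi / AFNS type), since a union bound over the exponentially many potential subset-witnesses does not close. Likewise, the ``rigidity argument'' keeping profile classes near-balanced is asserted but cannot follow from the promise once the lifting step itself is in doubt --- the profile map can be arbitrarily unbalanced when class densities coincide, so equitability repair could move a constant fraction of vertices. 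Finally, the theorem's whole point in~\cite{FLZ} is determinism, and your two suggested derandomization routes (conditional expectations, explicit hitting sets) are named but not constructed; in~\cite{FLZ} determinism is achieved structurally, by replacing sampling altogether with deterministic witness-finding, not by derandomizing a sampler. In short: correct diagnosis of the obstacles (one-sided testing, coNP-hardness of exact regularity testing, the need for the promise), but the central lifting lemma as formulated would fail, so the proof does not go through.
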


Our algorithm to compute a lower bound on the number of maximal near perfect matchings in dense graphs is based on estimating the number of maximal near perfect matchings in quasirandom graphs. Quasirandom graphs are graphs which share many properties with random graphs. The notion of quasirandomness was first introduced in seminal papers by Chung, Graham and Wilson \cite{quasi} and independently by Thomason \cite{quasi2}. In this paper, we will use a slightly different notion of quasirandomness.

Given a simple graph $G$ and $\varepsilon>0$, we say $G$ is \emph{$(\varepsilon,p)$-quasirandom}, denoted by $G\in\SQ(n,p,\varepsilon)$, if for every $X,Y\subseteq V(G)$ that satisfy $X\cap Y=\varnothing$ and $|X|\geq\varepsilon|V(G)|$, $|Y|\geq\varepsilon|V(G)|$, we have $|d_G(X,Y)-d_{K_{n,p}}(X,Y)|<\varepsilon$, where $K_{n,p}$ is an edge weighted complete graph on $V(G)$ with edge weight $p$. Here for a weighted graph, we define $\e(X,Y)=\sum_{e\in E(X,Y)}w(e)$, and the edge density $d(X,Y)=\e(X,Y)/(|X||Y|)$. We say a graph $G$ is \emph{generalized quasirandom}, if there is an equitable vertex partition of $V(G)$, such that the graphs induced on each part, and between every two different parts, are "random like". To be more precise, let $P$ be a symmetric $m\times m$-matrix, such that $0\leq p_{ij}\leq 1$, where $p_{ij}$ is the $(i,j)$-entry of $P$. A graph $G\in\SQ(n^{(m)},P,\varepsilon)$ if $V(G)=\bigsqcup_{i=1}^m V_i$ such that $|V_i|=n$ and $(V_i,V_j)$ is $\varepsilon$-regular with density $p_{ij}$, and $G[V_i]$ is $(\varepsilon,p_i)$-quasirandom, where $p_{i}$ is the $(i,i)$-entry of $P$.
\section{Matchings in generalized quasirandom graphs}

\subsection{Matchings in quasirandom graphs}

By the definition of $\varepsilon$-regular, we have the following lemma.
\begin{lemma}\label{lem:reg}
Suppose $|X|=|Y|=n$ and $(X,Y)$ is $\varepsilon$-regular with density $p$. Then
\[
\big|\{v\in X\mid (1-\delta)pn\leq d(v)\leq (1+\delta)pn\}\big|\geq (1-2\varepsilon)n,
\]
where $\delta=\varepsilon/p$.
\end{lemma}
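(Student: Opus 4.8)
The plan is to argue by contradiction, isolating the vertices of $X$ whose degree into $Y$ deviates too much from the typical value $pn$ and showing that $\varepsilon$-regularity forbids such a set from being large. First I would record the arithmetic simplification: with $\delta=\varepsilon/p$ one has $(1-\delta)pn=pn-\varepsilon n$ and $(1+\delta)pn=pn+\varepsilon n$, so the claimed event $(1-\delta)pn\le d(v)\le(1+\delta)pn$ is exactly $|d(v)-pn|\le\varepsilon n$, i.e.\ $|d(v)/n-p|\le\varepsilon$, where $d(v)$ denotes the number of neighbours of $v$ inside $Y$. Thus the ``bad'' set is $B=\{v\in X: |d(v)/n-p|>\varepsilon\}$, and it suffices to show $|B|<2\varepsilon n$.

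Next I would split $B$ into $B^{+}=\{v\in X: d(v)>(p+\varepsilon)n\}$ and $B^{-}=\{v\in X: d(v)<(p-\varepsilon)n\}$ and bound each separately. Suppose toward a contradiction that $|B^{+}|\ge\varepsilon n$. Taking $X'=B^{+}$ and $Y'=Y$, we have $|X'|\ge\varepsilon n=\varepsilon|X|$ and $|Y'|=n\ge\varepsilon|Y|$, so $(X',Y')$ is an admissible test pair in the definition of ``$\varepsilon$-regular with density $p$'', which yields $d(X',Y')\le p+\varepsilon$. On the other hand, counting edges by degrees,
\[
d(X',Y')=\frac{\e(X',Y')}{|X'|\,n}=\frac{\sum_{v\in X'}d(v)}{|X'|\,n}>\frac{|X'|\,(p+\varepsilon)n}{|X'|\,n}=p+\varepsilon,
\]
a contradiction. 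Hence $|B^{+}|<\varepsilon n$. A symmetric argument with $X'=B^{-}$, $Y'=Y$ gives $d(X',Y')\ge p-\varepsilon$ from regularity but $d(X',Y')<p-\varepsilon$ from the degree bound, so $|B^{-}|<\varepsilon n$ as well. Combining, $|B|=|B^{+}|+|B^{-}|<2\varepsilon n$, and therefore the number of good vertices is at least $(1-2\varepsilon)n$, as claimed.

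I do not expect a genuine obstacle here; this is the standard degree-regularization fact for regular pairs. The only points needing a little care are the translation between the degree threshold and the density threshold via $\delta=\varepsilon/p$, and checking that the exceptional sets, once assumed to have size at least $\varepsilon n$, are large enough to qualify as test sets in the regularity condition (with $Y'=Y$ always admissible since $n\ge\varepsilon n$ for $\varepsilon\le 1$).
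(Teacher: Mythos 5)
Your proof is correct and follows essentially the same argument as the paper: test the set of degree-deviating vertices against $Y$ in the regularity condition, using $(1\pm\delta)pn=(p\pm\varepsilon)n$, and derive a contradiction if that set has size at least $\varepsilon n$. The only difference is that you spell out both the high-degree and low-degree cases, while the paper treats only the high-degree case explicitly and leaves the symmetric case implicit.
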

\begin{proof}
Let $X'\subseteq X$ such that for every $v\in X'$, we have $d(v)>(1+\delta)pn$. Thus $\e(X',Y)\> (1+\delta)pn|X'|=(p+\varepsilon)|X'||Y|$. On the other hand, if $|X'|\geq \varepsilon n$, since $(X,Y)$ is $\varepsilon$-regular with density $p$, this gives us $\e(X',Y)\leq (p+\varepsilon)|X'||Y|$, contradiction.
\end{proof}
We are now going to prove Theorem~\ref{thm:main}. We suggest that reader consults Algorithm~\ref{alg-quasi-bi} while reading the proof.

\begin{proof}[Proof of Theorem \ref{thm:main}]
Suppose $V(G)$ has a bipartition $X,Y$, with $|X|=|Y|=n$, and the edge density between $X,Y$ is $p$. Since $(X,Y)$ is $\varepsilon$-regular, it is $\sqrt\varepsilon$-regular. Let $\T(X)$ be the set of \emph{typical} vertices in $X$, that is, set of vertices $v$ such that $(1-\delta)pn\leq d(v)\leq (1+\delta)pn$, where $\delta=\sqrt\varepsilon/p$.

We first consider the lower bound. Count the number of near perfect matchings greedily in $k+t$ phases, the values of $k$ and $t$ will be determined later. In {\sc Phase~1}, Let $X_1:=X$, and we pick an arbitrary vertex $v\in\T(X_1)$. Pick a vertex $u\in N(v)$ arbitrarily. Let $X_1:=X-v$, $G:=G-\{u,v\}$ and $\T(X_1):=\T(X_1)-v$. Keep doing this procedure $\sqrt \varepsilon n$ times, that is, we stop {\sc Phase 1} after removing $\sqrt \varepsilon n$ vertices from $X_1$. We denote the remaining vertices in $X_1$ by $X_2$, and move to {\sc Phase 2}.

In {\sc Phase 2}, let $$\T(X_2):=\{v\in X_2\mid  (1-\delta)pn(1-\sqrt\varepsilon)\leq d(v)\leq (1+\delta)pn(1-\sqrt\varepsilon)\}.$$ We pick a vertex $v\in \T(X_2)$ arbitrarily and pick $u\in N(v)$. Let $X_2:=X_2-v$, $G:=G-\{u,v\}$ and $\T(X_2):=\T(X_2)-v$. We keep doing this procedure $\sqrt \varepsilon n$ times, then we let $X_3:=X_2$ and move to {\sc Phase 3}, and we similarly let $$\T(X_3):=\{v\in X_3\mid  (1-\delta)pn(1-2\sqrt\varepsilon)\leq d(v)\leq (1+\delta)pn(1-2\sqrt\varepsilon)\}.$$

Suppose after applying {\sc Phase k}, we have $|X_{k+1}|\leq c\sqrt\varepsilon n$ in {\sc Phase k+1}, where $c=\frac{1}{(1-\delta)p}$. In {\sc Phase k+1}, we pick a vertex in $\T(X_{k+1})$ and remove it as well as one of its neighbor as we did before. But now, instead of repeating this procedure $\sqrt\varepsilon n$ times, we do it $(1-\delta)p|X_{k+1}|$ times. Then we move to {\sc Phase k+2}. We run the algorithm in {\sc Phase k+2} $(1-\delta)p|X_{k+2}|$ times and then move to the next phase. We stop the algorithm after {\sc Phase k+t}, if in {\sc Phase k+t+1} we have $|X_{k+t+1}|\leq \sqrt\varepsilon n$. See {\sc Algorithm} \ref{alg-quasi-bi} for the algorithm.

The algorithm is well-defined, since in {\sc Phase i} for each $i$, the graph on $(X_i,Y_i)$ is $\sqrt\varepsilon$-regular with density $p$. By Lemma \ref{lem:reg}, we can always define the set $\T(X_i)$. We ignore the floor and ceiling function here to simplify the computation. By the way we define $k$ and $t$, we have
\[
k=\frac{1}{\sqrt\varepsilon}-c, \qquad t=\frac{\log(1-\delta)p}{\log(1-(1-\delta)p)}.
\]

Note that the collection of edges we removed in each steps in the algorithm gives us a $\sqrt\varepsilon$-near perfect matching in $G$. Therefore,
\[
\nm(G,\sqrt\varepsilon)\geq\prod_{i=0}^{k-1}\frac{\big((1-\delta)p(n-i\sqrt\varepsilon n)\big)!}{\big((1-\delta)p(n-i\sqrt\varepsilon n)-\sqrt\varepsilon n\big)!}\prod_{i=0}^{t-1}\big((1-\delta)pc\sqrt\varepsilon n(1-(1-\delta)p)^i\big)!.
\]
The first product counts the number of possible different collections of edges we removed from first $k$ phases, and the second product counts the number of different collections of edges we removed from the last $t$ phases. By a complicated but standard computation (see Lemma~\ref{lem:a1} in Appendix for the computation), we have
\[
\log \nm(G,\sqrt\varepsilon)\geq (1-3\sqrt\varepsilon)n\log pn.
\]

Now we consider the upper bound. Note that all the vertices in $\T(X_1)$ has at most $(1+\delta)pn$ neighbors, we have $\nm(G,\sqrt\varepsilon)\leq \big((1+\delta)pn\big)^{(1-2\sqrt\varepsilon) n}n^{2\sqrt\varepsilon n}$. Therefore,
\[
\log\nm(G,\sqrt\varepsilon)\leq (1+3\sqrt\varepsilon)n\log pn,
\]
finishes the proof.
\end{proof}

\begin{algorithm}[H]
\SetAlgoLined
\SetKwInOut{Input}{Input}
\SetKwInOut{Return}{Return}
\Input{bipartite $\varepsilon$-regular graph $G=(X,Y)$ with density $p$.}
 $c=\frac{1}{(1-\delta)p}, k=\frac{1}{\sqrt{\varepsilon}}-c, t=\frac{\log(1-\delta)p}{\log(1-(1-\delta)p)}$, $\nm=1$,
 $X_0=X$\;
 \For{$i= 1$ to $k$}{
    $X_i = X_{i-1}$\;
    $\T(X_i)=\{v\in X_i\mid  (1-\delta)pn(1-(i-1)\sqrt\varepsilon)\leq d(v)\leq (1+\delta)pn(1-(i-1)\sqrt\varepsilon)\}$\;
    \For{$j= 1$ to $\sqrt{\varepsilon}n$}{
        Pick $v$ from $\T(X_i)$ and pick $u$ from $N(v)$\;
        $X_i = X_i-v$, $G = G-\{u,v\}$, $\T(X_i)=\T(X_i)-v$\;
        $\nm = \nm \times \big[(1-\delta)pn(1-(i-1)\sqrt\varepsilon)-j+1\big]$\;
    }
 }
 
 \For{$i= 1$ to $t$}{
    $X_{k+i}=X_{k+i-1}$\;
    $\T(X_{k+i})=\{v\in X_{k+i}\mid  (1-\delta)pn(1-((k+i)-1)\sqrt\varepsilon)\leq d(v)\leq (1+\delta)pn(1-((k+i)-1)\sqrt\varepsilon)\}$\;
    \For{$j= 1$ to $(1-\delta)p|X_{k+i}|$}{
        Pick $v$ from $\T(X_{k+i})$ and pick $u$ from $N(v)$\;
        $X_{k+i} = X_{k+i}-v$,
        $G = G-\{u,v\}$,
        $\T(X_{k+i})=\T(X_{k+i})-v$\;
        $\nm = \nm \times \big((1-\delta)pc\sqrt\varepsilon n(1-(1-\delta)p)^i-j+1\big)$\;
    }
 }
\Return{$\nm$}
\caption{Near Perfect Matchings in Quasirandom Bipartite Graphs} 
 \label{alg-quasi-bi}
\end{algorithm}

By applying the same greedy procedure, we also obtain a good approximation for the quasirandom graphs. The proof is similar to the proof of Theorem~\ref{thm:main}, and we omit further details.
\begin{theorem}\label{thm:1q}
Suppose $\varepsilon>0$ and $G\in\SQ(n,p,\varepsilon)$. Then there exists $n_0=n_0(\varepsilon,p)$, such that if $|V(G)|=n>n_0$, we have
\[
(1-3\sqrt\varepsilon)\frac{1}{2}n\log pn\leq\log\nm(G,\sqrt\varepsilon)\leq(1+3\sqrt\varepsilon)\frac{1}{2}n\log pn.
\]
\end{theorem}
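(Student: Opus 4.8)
The plan is to adapt the greedy phase-based argument from the proof of Theorem~\ref{thm:main} to the non-bipartite quasirandom setting $G\in\SQ(n,p,\varepsilon)$. The conceptual difference is that a quasirandom graph on $n$ vertices behaves, for the purpose of degree statistics, like a single dense random graph rather than a bipartite pair, so every matching edge now consumes \emph{two} vertices from the same vertex set. This is precisely why the main term acquires the factor $\frac12$: a maximal $\sqrt\varepsilon$-near perfect matching has roughly $n/2$ edges, and each greedy choice removes a vertex together with one of its $\approx pn$ neighbors.

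First I would establish the analogue of Lemma~\ref{lem:reg} in the quasirandom setting. Using the definition of $(\varepsilon,p)$-quasirandomness applied to the partition of $V(G)$ into a small set $X'$ of atypically high- or low-degree vertices and its complement, one shows that all but a $2\varepsilon$ fraction of vertices $v$ satisfy $(1-\delta)pn\leq d(v)\leq(1+\delta)pn$ with $\delta=\sqrt\varepsilon/p$; call this typical set $\T$. The key point is that quasirandomness is inherited by induced subgraphs on linearly large vertex sets, so after deleting $o(n)$ vertices the remaining graph is still $\sqrt\varepsilon$-quasirandom with density $p$, which keeps the typical set well-defined throughout all phases.

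Next I would run the identical $k+t$-phase greedy procedure. In each of the first $k$ phases we delete $\sqrt\varepsilon n$ matching edges, picking $v\in\T(X_i)$ and an arbitrary neighbor $u\in N(v)$, now deleting \emph{both} $u$ and $v$ from the single evolving vertex set; the remaining phases clean up the last $c\sqrt\varepsilon n$ vertices geometrically. The count of distinct edge-collections produced is the same product of factorial ratios as in the bipartite case, and the lower bound $\log\nm(G,\sqrt\varepsilon)\geq(1-3\sqrt\varepsilon)\tfrac12 n\log pn$ follows from the same computation as in Lemma~\ref{lem:a1}, with the overall count of \emph{phases} halved because each edge now removes two vertices from $V(G)$ instead of one from $X$. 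For the upper bound, every maximal $\sqrt\varepsilon$-near perfect matching is determined by choosing, for each of the $\approx n/2$ matched typical vertices, one of its at most $(1+\delta)pn$ neighbors, giving $\nm(G,\sqrt\varepsilon)\leq\big((1+\delta)pn\big)^{(1-2\sqrt\varepsilon)n/2}\,n^{\sqrt\varepsilon n}$ and hence $\log\nm(G,\sqrt\varepsilon)\leq(1+3\sqrt\varepsilon)\tfrac12 n\log pn$.

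The main obstacle I anticipate is bookkeeping the vertex budget correctly so that the factor of $\frac12$ emerges cleanly: in the bipartite proof the deleted neighbor $u$ lives in $Y$ and never interferes with the typicality analysis on $X$, whereas here $u$ is drawn from the same set being tracked, so one must verify that deleting both endpoints does not distort the degree bounds used to define $\T(X_i)$ beyond the $\delta$-slack already built in. Since we only ever delete a $o(1)$ fraction before the final phases and quasirandomness is robust under such deletions, this is quantitatively harmless, but it is the step requiring care; the authors' remark that the proof is ``similar'' and the details are omitted is justified precisely because this is the only genuinely new point.
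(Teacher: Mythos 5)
Your proposal is correct and takes essentially the same approach as the paper: the paper offers no separate proof of this theorem, stating only that it follows by ``applying the same greedy procedure'' as Theorem~\ref{thm:main} with further details omitted, and your sketch---the quasirandom analogue of Lemma~\ref{lem:reg}, the $k+t$-phase greedy count with both endpoints of each chosen edge deleted from the single evolving vertex set (whence the factor $\tfrac12$), and the corresponding degree-based upper bound---is precisely that intended adaptation. The bookkeeping issue you flag (typicality surviving deletion of both endpoints) is indeed absorbed by the $\delta$-slack and the robustness of quasirandomness under removal of $o(n)$ vertices, exactly as in the bipartite argument.
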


\subsection{Matchings in generalized quasirandom graphs}

In this subsection, we will focus on generalized quasirandom graphs.  

For a graph $H$ of order $m$, where $V(H)=[m]$, let $w:[m]^2\to[0,1]$ be a symmetric function such that $w(ij)=0$ when $ij\not\in E(H)$. Now consider the following linear equations.
\begin{equation}\label{eq:1}
\sum_{j=1}^mw(ij)=1\qquad\text{ for every }1\leq i\leq m,
\end{equation}
 We write $w(i):=w(ii)$. The following example shows that, the system of linear equations (\ref{eq:1}) may have exactly one solution, or infinitely many solutions, or no solutions, see Figure \ref{fig:equations}. In all the examples, we assume $w(i)=0$ for every vertex $i$.
\begin{figure}[h]
\centering
\includegraphics[width=11cm]{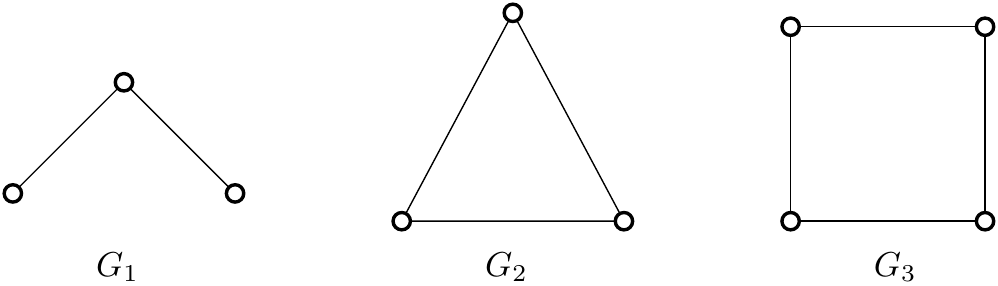}
\caption{Linear equation (\ref{eq:1}) of $G_1$ does not have any solutions, $G_2$ has exactly one solution, and $G_3$ has infinitely many solutions.}\label{fig:equations}
\end{figure}

Given a graph $H$, suppose the linear equation (\ref{eq:1}) does not have any solutions, then for each edge $e$ and vertex $v$ in $H$, we assign values $w(i)\in [0,1]$ and $w(ij)\in[0,1]$ arbitrarily if $ij\in E(H)$. Define the error $\Er(H,w)$ as follows 
\[
\Er(H,w)=\sum_{i=1}^m\Big|1-\sum_{j=1}^mw(ij)\Big|.
\]
Let $\Er(H)=\min_{w:E(H)\to [0,1]}\Er(H,w).$  Suppose $|E(H)|=h$ and let $\bm{x}=(e_1,\dots,e_h)$. We can rewrite~(\ref{eq:1}) as
\[
A\bm{x}=\bm{b}^T,
\]
where $\bm{b}=(1,\dots,1)$ is a $(1\times m)$-vector and $A$ is a $(m\times h)$-matrix. Let 
\begin{equation*}
Q:=\left[
\begin{array}{cc}
A & I_{m\times m}\\
-A & I_{m\times m}
\end{array}\right]
\end{equation*}
where $I$ is the identity matrix. Let $\bm{c}=(\bm{b},-\bm{b})$ be a $(1\times 2m)$-vector, and let $\bm{y}=(\bm{x},y_1,\dots,y_m)$. Then $\Er(H)$ is the solution of the linear programming
\begin{align}
&\min \sum_{i=1}^my_i,\nonumber \\
& Q\bm{y}^T\geq \bm{c}^T,\\
& \bm{0}\leq \bm{y}\leq \bm{1}. \nonumber
\end{align}

Note that $A$ is a $(0,1)$-matrix and the sum of each column of $A$ is $2$. Then for a fixed $m$, there are finitely many possible matrices $A$. This fact implies the following lemma.

\begin{lemma}\label{lem:er}
Given a graph $H$ with $V(H)=[m]$. If the system of linear equations (\ref{eq:1}) does not have any solution on $H$, then there exists a constant $c=c(m)>0$, such that
$
\Er(H)\geq c.
$
\end{lemma}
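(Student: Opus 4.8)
The plan is to exploit the crucial structural observation the paper has already isolated: the matrix $A$ is a $(0,1)$-matrix whose columns each sum to $2$ (each column is the incidence vector of an edge, with exactly two $1$'s), and the number of vertices $m$ is fixed. Since $A$ is determined by the graph $H$ on the fixed vertex set $[m]$, and there are only finitely many graphs on $m$ labelled vertices, there are only finitely many possible incidence matrices $A$ altogether. The error $\Er(H)$ is, by the reformulation just given, the optimal value of a linear program whose data ($Q$, $\bm{c}$, and the box constraints) depend only on $A$. The heart of the argument is therefore a finiteness-plus-positivity statement: each individual LP yields a positive optimum precisely when (\ref{eq:1}) is infeasible, and a finite set of positive numbers has a positive minimum.

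First I would fix $H$ and argue that $\Er(H)>0$ for this particular $H$. The quantity $\Er(H)=\min_{w}\Er(H,w)$ is a minimum of the continuous function $\Er(H,w)=\sum_{i=1}^m\bigl|1-\sum_{j}w(ij)\bigr|$ over the compact box $w\in[0,1]^{E(H)}$, so the minimum is attained at some $w^\ast$. If $\Er(H)=0$, then $\Er(H,w^\ast)=0$, which forces $\sum_{j}w^\ast(ij)=1$ for every $i$; that is exactly a solution of the linear system (\ref{eq:1}), contradicting the hypothesis of infeasibility. Hence $\Er(H)>0$ whenever (\ref{eq:1}) has no solution. (Equivalently, one can run this through the LP formulation: the LP is feasible and bounded, so its optimum is attained, and a zero optimum would produce a feasible point of (\ref{eq:1}).)

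Next I would promote this pointwise positivity to a uniform bound. Let $\mathcal{H}$ denote the (finite) set of all graphs $H$ on the vertex set $[m]$ for which the system (\ref{eq:1}) has no solution. For each $H\in\mathcal{H}$ we have just shown $\Er(H)>0$, and $\mathcal{H}$ is finite because there are at most $2^{\binom{m}{2}}$ graphs on $m$ labelled vertices. Therefore
\[
c(m):=\min_{H\in\mathcal{H}}\Er(H)
\]
is a minimum of finitely many strictly positive real numbers, hence $c(m)>0$. For any $H$ whose linear system is infeasible we then have $\Er(H)\ge c(m)$, which is the claimed bound. I would note that $c(m)$ depends only on $m$, as required, since $\mathcal{H}$ depends only on $m$.

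The only genuine subtlety — the step I would treat most carefully — is the attainment/positivity of the minimum in the single-$H$ case. Once one observes that $\Er(H,\cdot)$ is continuous on the compact set $[0,1]^{E(H)}$ and that $\Er(H,w)=0$ is equivalent to $w$ solving (\ref{eq:1}), the rest is immediate, but it is worth stating explicitly that a nonnegative continuous function on a compact set attains its infimum, so that "$>0$ pointwise" is legitimate rather than merely "$\inf=0$ is not attained." After that, the passage to a uniform $c(m)$ is purely the fact that the minimum of a finite collection of positive numbers is positive; no further estimate is needed, and in particular $c(m)$ can in principle be computed by solving the finitely many linear programs explicitly.
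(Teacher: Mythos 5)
Your proof is correct and is essentially the argument the paper intends: the paper's justification is precisely the remark that for fixed $m$ there are only finitely many possible incidence matrices $A$ (equivalently, finitely many graphs $H$ on $[m]$), so one takes the minimum of the finitely many strictly positive optimal values $\Er(H)$ over the infeasible instances. Your explicit use of compactness of $[0,1]^{E(H)}$ and continuity of $\Er(H,\cdot)$ to get attainment, and hence strict positivity when (\ref{eq:1}) is infeasible, fills in exactly the step the paper leaves implicit in its LP formulation.
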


For graph $G\in\SQ(n^{(m)},P,\varepsilon)$, let $p_{ij}$ be the $(i,j)$-entry of $P$ and $p_i$ be the $(i,i)$-entry of $P$. Suppose $H$ is the quotient graph of $G$, that is, $V(H)=[m]$ and edge $ij$ in $H$ has weight $p_{ij}$ if $p_{ij}\neq0$, vertex $i$ has weight $p_i=p_{ii}$. We are going to prove Theorem \ref{thm:multimatching}.
\begin{proof}[Proof of Theorem \ref{thm:multimatching}]
Suppose $V(G)=V_1\sqcup\dots\sqcup V_m$ and $|V_1|=\dots=|V_m|=n$. We consider linear equation (\ref{eq:1}) on the quotient graph $H$.
\medskip
	
\CCase{1}{Linear system (\ref{eq:1}) does not have any solution.}

By Lemma \ref{lem:er}, given $m$ there is a constant $c(m)>0$ such that $\Er(H)\geq c(m)$. Let $\varepsilon<c^2(m)$. Suppose $G$ has a maximal $\sqrt\varepsilon$-near perfect matching $\M$. Then for every $1\leq i\leq m$, except at most $\sqrt\varepsilon n$ vertices, all the vertices in $V_i$ are covered by edges in $\M$. Now we consider the quotient graph $H$, for every $ij\in E(H)$, define $w(ij)=\frac{|\M\cap E(G[V_i,V_j])|}{n}$ and $w(i)=\frac{|\M\cap E(G[V_i])|}{n}$ for every $i\in [m]$. $\M$ being a $\sqrt\varepsilon$-near perfect matching means $\Er(H,w)<\sqrt\varepsilon<c(m)$, which is a contradiction.
\medskip
	
\CCase{2}{Linear system (\ref{eq:1}) has a unique solution.}

Now for every $e\in E(H)$ and $v\in V(H)$, we have a solution $w$ for the linear system (\ref{eq:1}). For every $1\leq i\leq m$, partition the vertex set $V_i$ into at most $m$ parts $V_{i,0},V_{i,1},\dots,V_{i,i-1},V_{i,i+1},\dots,V_{i,m}$, satisfying  $|V_{i,j}|=w(ij)n$ and $|V_{i,0}|=w(i)n$. Suppose $N(V_i)$ is the number of different ways of partitioning $V_i$, define $w(i0)=w(i)$, then we have
\[
N(V_i)=\frac{n!}{\prod_{0\leq j\leq m,\, j\neq i}^m(w(ij)n)!}.
\]

Note that $G[V_{i,j},V_{j,i}]$ and $G[V_{i,0}]$ is quasirandom for every $ij\in E(H)$ and every $i\in [m]$. Applying Theorems \ref{thm:main} and  \ref{thm:1q} on $G[V_{i,j},V_{j,i}]$ for every $ij\in E(H)$ and $G[V_{i,0}]$ for every $i\in [m]$ with $w(i)\neq 0$ gives
\begin{align*}
\log\nm(G[V_{i,j},V_{j,i}],\sqrt\varepsilon)&\geq(1-4\sqrt\varepsilon)w(ij)n\log w(ij)n,\\
\log\nm(G[V_{i,0}],\sqrt\varepsilon)&\geq(1-4\sqrt\varepsilon)w(i)n\log w(i)n.
\end{align*}

Therefore,
\[
\nm(G,\sqrt\varepsilon)\geq\prod_{w(ij)\neq0}\nm(G[V_{i,j},V_{j,i}],\sqrt\varepsilon)\prod_{w(i)\neq0}\nm(G[V_{i,0}],\sqrt\varepsilon)\prod_{i=1}^mN(V_i),
\]
which means
\begin{align*}
&\,\log\nm(G,\sqrt\varepsilon)\\
\geq&\,\sum_{w(ij)\neq0}\log\nm(G[V_{i,j},V_{j,i}],\sqrt\varepsilon)+\sum_{w(i)\neq0}\log\nm(G[V_{i,0}],\sqrt\varepsilon)+\sum_{i=1}^m\log N(V_i)\\
\geq&\,(1-4\sqrt\varepsilon)\frac{m}{2}n\log n.
\end{align*}

We now consider the upper bound. Let $\M$ be an arbitrary $\sqrt\varepsilon$-near perfect matching in $G$. For the quotient graph $H$, define $w^\prime(ij)=\frac{|\M\cap E(G[V_i,V_j])|}{n}$ for every $ij\in E(H)$, and $w^\prime(i)=\frac{2|\M\cap E(G[V_i])|}{n}$. It is easy to see that $\Er(H,w^\prime)<\sqrt\varepsilon$. Therefore,
\[
\nm(G,\sqrt\varepsilon)\leq\prod_{w'(ij)\neq0}\nm(G[V_{i,j},V_{j,i}],\sqrt\varepsilon)\prod_{w'(i)\neq0}\nm(G[V_{i,0}],\sqrt\varepsilon)\prod_{i=1}^mN(V_i)\cdot n^{\sqrt\varepsilon n}.
\]
Note that $w'$ may not be a solution of (\ref{eq:1}). Then we have
\begin{align*}
&\,\log\nm(G,\sqrt\varepsilon)\\
\leq&\,\sum_{w'(ij)\neq0}\log\nm(G[V_{i,j},V_{j,i}],\sqrt\varepsilon)+\sum_{w'(i)\neq0}\log\nm(G[V_{i,0}],\sqrt\varepsilon)\\ &\, +\sum_{i=1}^m\log N(V_i)+\sqrt\varepsilon n\log n\\
\leq&\,(1+6\sqrt\varepsilon)\frac{m}{2}n\log n.
\end{align*}
\medskip
	
\CCase{3}{Linear system (\ref{eq:1}) has infinitely many solutions.}

In this case, there exists a positive integer $t$, and variables $x_1,\dots,x_t\in [m]^2$ , such that if we fix the value of $w(x_1),\dots,w(x_t)$, the system of linear equations (\ref{eq:1}) has a unique solution. Let $\bm{x}=(x_1,\dots,x_t)$, and define $\nm(G,\varepsilon,w(\bm{x}))$ to be the number of maximal $\varepsilon$-near perfect matchings $\M$ in $G$, such that for every $i\in [t]$, suppose $x_i$ corresponds $ij$ in $[m]^2$ with $i\neq j$ (or $i=j$), then $|\M\cap G[V_i,V_j]|=w(x_i)n$ (or $|\M\cap G[V_i]|=w(x_i)n/2$).

 Roughly speaking, the number of maximal $\varepsilon$-near perfect matchings in $G$ is about
$$
\int_{[0,1]^t}\nm(G,\varepsilon,w(\bm{x}))\,\mathrm{d}\bm{x}.
$$

In order to avoid double counting, we should be more careful here, since an $\varepsilon$-near perfect matching in $\nm(G,\varepsilon,w(\bm{x}))$ will also be counted in $\nm(G,\varepsilon,w(\bm{x})+\bm{\varepsilon/m^2})$. Let $l=1/\sqrt\varepsilon$, we have
\[
\nm(G,\sqrt\varepsilon)\geq\sum_{i_1,\dots,i_t=0}^l\nm(G,\sqrt\varepsilon,(i_1\sqrt\varepsilon,\dots,i_t\sqrt\varepsilon)).
\]
Then applying the results in Case 2 yields
\[
\log\nm(G,\sqrt\varepsilon)\geq(1-4\sqrt\varepsilon)\frac{m}{2}n\log n.
\]

Considering the upper bound, given an arbitrary $\sqrt\varepsilon$-near perfect matching $\M$, similarly as we did before, define $w'(ij)=\frac{|\M\cap E(G[V_i,V_j])|}{n}$ and $w'(i)=\frac{2|\M\cap E(G[V_i])|}{n}$. This gives $\Er(H,w')<\sqrt\varepsilon$, and therefore, for $n^n\geq(l+1)^{lm}$, we have
\begin{align*}
\log\nm(G,\sqrt\varepsilon)
\leq&\, t\log(l+1)+\log\nm(G,\sqrt\varepsilon,w'(\bm{x}))\\
\leq&\,(1+7\sqrt\varepsilon)\frac{m}{2}n\log n,
\end{align*}
which completes the proof.
\end{proof}

\section{Matchings in dense graphs}
In this section, we analyze the properties of large dense graphs. Suppose $G$ has $n$ vertices. 
After applying Szemer\'edi Regularity Lemma, we have an equitable partition $\mathcal{P}=\{V_1,\dots,V_K\}$. Situation here is more complicated than the one in Section~3, since there can be large matchings between irregular pairs and pairs with low edge densities. We will use the following algorithm by Micali and Vazirani \cite{MV80} to get the size of the maximum matching in graph $G$ and in graph $G[V_i,V_j]$ when $(V_i,V_j)$ is irregular or $\varepsilon$-regular but has low edge density.
\begin{theorem}[\cite{MV80}]\label{thm:maxmatching}
Given a graph $G$, there is a polynomial time algorithm which outputs the size of the maximum matching in $G$, and the running time is $O(\sqrt{|V|}|E|)$.
\end{theorem}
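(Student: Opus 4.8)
The plan is to follow the two-level structure common to all matching algorithms that achieve near-linear time per phase: first reduce the global running time to a bound on the \emph{number} of phases via the Hopcroft--Karp framework, and then show that a single phase can be carried out in $O(|E|)$ time even in the presence of odd cycles. The running time $O(\sqrt{|V|}\,|E|)$ then factors as (number of phases) $\times$ (cost per phase).

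First I would establish the phase-count bound. By Berge's theorem, a matching $\M$ is maximum if and only if $G$ contains no $\M$-augmenting path. The Hopcroft--Karp strategy repeatedly locates a \emph{maximal} collection of vertex-disjoint augmenting paths all of the current minimum length $\ell$ and augments along all of them at once; this is one phase. A symmetric-difference argument (comparing $\M$ with a maximum matching) shows both that the minimum augmenting-path length strictly increases from one phase to the next, and that if $\M$ is short of maximum by $k$ edges then some augmenting path has length $O(|V|/k)$. Combining these two facts, after $O(\sqrt{|V|})$ phases no augmenting path can remain, so $\M$ is maximum. This part of the argument is identical for bipartite and general graphs.

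The hard part, and the entire content of \cite{MV80}, is implementing one phase in $O(|E|)$ time for general (non-bipartite) graphs. In the bipartite case a single BFS assigns levels and a single DFS extracts a maximal disjoint family of shortest augmenting paths, all in $O(|E|)$. Odd cycles destroy this simplicity: a vertex may be reachable from a free vertex by both an even-length and an odd-length shortest alternating path. I would therefore assign to each vertex $v$ an \emph{even level} and an \emph{odd level}, the lengths of the shortest even/odd alternating paths from a free vertex, computed by a breadth-first pass that alternately relaxes along unmatched and matched edges. Edges whose endpoints' levels certify a short closed alternating walk are marked as \emph{bridges}; each bridge is the apex of a potential blossom or augmenting path. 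The core subroutine is a double depth-first search launched from the bridges in order of increasing \emph{tenacity}: it grows two alternating search trees downward toward free vertices and either reaches two distinct free vertices, yielding an augmenting path that is extracted and whose vertices are removed, or the two searches collide at a common bottleneck vertex, certifying a blossom whose vertices are marked so that later searches treat them as a single contracted unit. An amortized analysis in which each edge is scanned $O(1)$ times over all searches in a phase gives the $O(|E|)$ bound.

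The main obstacle is precisely this last step: proving that the level/bridge/tenacity bookkeeping identifies exactly the blossoms relevant at the current phase length, and that marking them neither destroys nor creates augmenting paths of the target length, so that the extracted family really is maximal. This is the delicate combinatorial heart of the argument, and it is the point at which the original analysis was subsequently found to require substantial repair. Since only the stated bound $O(\sqrt{|V|}\,|E|)$ is used downstream in this paper, I would treat the phase subroutine as a black box and cite \cite{MV80} (together with the later corrected analyses) rather than reproduce this argument in full.
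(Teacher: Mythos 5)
The paper offers no proof of this statement at all: it is imported verbatim as a black-box result of Micali and Vazirani \cite{MV80}, and only the stated bound $O(\sqrt{|V|}\,|E|)$ is used downstream. Your outline of the Hopcroft--Karp phase framework and the linear-time phase via levels, bridges, tenacity and double depth-first search is an accurate sketch of the known argument, and since you too ultimately defer the delicate phase analysis to the citation, your treatment is in substance the same as the paper's.
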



Suppose $H$ is the quotient graph $G/\mathcal{P}$. Let $E_1\subseteq E(H)$ be the set of edges corresponding to the irregular pairs in $G$, $E_2\subseteq E(H)$ be the set of edges corresponding to the $\varepsilon$-regular pairs with edge density at most $n^{\sqrt\varepsilon-1}$, $E_3\subseteq E(H)$ be the set of edges corresponding to the $\varepsilon$-regular pairs with edge density in $[n^{\sqrt\varepsilon-1},n^{-\sqrt\varepsilon}]$ in $G$, and let $E_4\subseteq E(H)$ be the set of edges corresponding to the $\varepsilon$-regular pairs with edge density at least $n^{-\sqrt\varepsilon}$. For every $ij\in E_1$, let $m_{ij}$ be size of the maximum matching in $G[V_i,V_j]$, and let $r_{ij}=Km_{ij}/n$. For every $i\in V(H)$, let $m_i$ be the size of maximum matching in $G[V_i]$, and let $r_i=2Km_i/n$.

Let $Q$ be the graph obtained from $G$ by removing edges inside each $V_i$ and edges between irregular pairs. Suppose $\mathfrak{M}(Q)$ is the set of maximal matchings in $Q$ which can be extended to $\sqrt\varepsilon$-near perfect matchings in $G$. We write $\MM(Q)=|\mathfrak{M}(Q)|$. The following inequalities gives us a way to find $\mathfrak{M}(Q)$ and maximize $\MM(Q)$.

\begin{equation}\label{eq:2}
\begin{split}
&\,0\leq w(e)\leq 1, \quad \text{ for every }e\in E_2\cup E_3\cup E_4,\\
&\,0\leq w(e)\leq r_{e}, \quad\text{ for every }e\in E_1,\\
&\sum_{j\neq i}w(ij)\geq1-r_i-\sqrt\varepsilon,\quad\text{ for every }i\in V(H).
\end{split}
\end{equation}

It is easy to see that if $G$ has $\sqrt\varepsilon$-near perfect matchings, inequality (\ref{eq:2}) has solutions. Define $\mathfrak{S}$ to be the set of feasible solutions of (\ref{eq:2}), and let
\begin{equation}\label{eq:3}
s:=\sup_{w(\mathbf{e})\in\mathfrak{S}}\sum_{e\in E_4}\frac{w(e)n}{K}\log\frac{w(e)n}{K}+\sum_{e\in E_3}\frac{w(e)n}{K}\log p_e\frac{w(e)n}{K}.
\end{equation}

By Theorems \ref{thm:main} and \ref{thm:1q}, we have $\MM(Q)\geq(1-4\sqrt\varepsilon)s$, which means $\nm(G,\sqrt\varepsilon)\geq(1-4\sqrt\varepsilon)s$, and this proves Theorem \ref{thm:1.7}. 

With all tools in hand, we are going to state the algorithm {\sc Number of Max Near Perfect Matchings Dense}. Given a graph $G$ of order $n$ and a real number $\varepsilon>0$, we do the following:

\begin{algorithm}[H]
\SetAlgoLined
\SetKwInOut{Input}{Input}
 \medskip
 
 {\sc Step 1.} Apply the algorithm in Theorem \ref{thm:maxmatching} on $G$. If $G$ does not contain any $\varepsilon$-near perfect matchings, output $0$. Otherwise, do the following steps.
 \smallskip
 
 {\sc Step 2.} Take $\tau=3\varepsilon^2/2$, and $\alpha=1/2$, $h=1/\varepsilon$. Apply the algorithm in Theorem~\ref{thm:algpartition} with integer $k$ taking values from $h$ to $M(h,\tau)$. Then the algorithm will output an $\varepsilon^2$-Szemer\'edi partition into $K$ parts, with $h\leq K\leq M(h,\tau)$.
 \smallskip

{\sc Step 3.} Apply the algorithm in Theorem \ref{thm:maxmatching} at most $K^2$ times, to compute the size of maximum matchings to obtain $r_i$ and $r_{ij}$. Solve the inequalities (\ref{eq:2}) and compute the value of $s$ in (\ref{eq:3}). Let $\ell=(1-4\varepsilon)s$, then output $n^{\ell n}$.
\medskip
  
 \caption{{\sc Number of Max Near Perfect Matchings Dense}} 
\end{algorithm}

\medskip

The above algorithm provides a lower bound for the number of maximal near perfect matchings, and its running time is $O(n^{5/2})$. Unfortunately, the lower bound we obtain is not tight. Let us illustrate on this by an example.

Suppose $G$ is a dense graph of order $n$ together with a Szemer\'edi partition $\mathcal{P}=V_1,\dots,V_K$, each of size $n/K$, where $K\geq2/\varepsilon$ and suppose $K\equiv2\mod4$. Induced graphs between all the pairs $(V_i,V_j)$ are $\varepsilon$-regular except $K/2\leq\varepsilon K^2$ irregular pairs $(V_i,V_{i+1})$ for $i=1,3,5,\dots,K/2$. Graphs $G[V_i]$ are empty for $1\leq i\leq (K+2)/2$, and graphs $G[V_i,V_j]$ are empty when $i\leq(K+2)/2$ and $j\neq i+1$ when $i$ is odd, $j\neq i-1$ when $i$ is even. All the vertices in $V_i$ form a large complete graph for $i\geq (K+4)/2$, and graphs $G[V_i,V_{i+1}]$ are complete bipartite for $i=1,3,5,\dots,K/2$.

Now, it is easy to see that the number of perfect matchings in $G$ is $n^{n/2}$. After we remove edges between irregular pairs, we remove $\frac{K}{2}(\frac{n}{K})^2<\varepsilon n^2$ edges. Then the number of extend-able maximal matchings in the obtained graph (the output of the above algorithm) is $n^{n/4}$, we lose a factor $n^{n/4}$.

\section*{Acknowledgements}
The authors would like to thank Andrei Bulatov, Bojan Mohar and Fan Wei for many helpful discussions. We are also thankful to Heng Guo for pointing out an inaccuracy in the introduction, and bringing reference~\cite{PR17} to our attention after the first version of this paper appeared on arXiv.

\appendix
\section{}

\begin{lemma}\label{lem:a1}
Given $\varepsilon>0$, $\delta=\sqrt\varepsilon/p$ and $c=1/(1-\delta)p$. Suppose
\[
k=\frac{1}{\sqrt\varepsilon}-c, \qquad t=\frac{\log(1-\delta)p}{\log(1-(1-\delta)p)},
\]
and \[
\nm(G,\sqrt\varepsilon)\geq\prod_{i=0}^{k-1}\frac{\big((1-\delta)p(n-i\sqrt\varepsilon n)\big)!}{\big((1-\delta)p(n-i\sqrt\varepsilon n)-\sqrt\varepsilon n\big)!}\prod_{i=0}^{t-1}\big((1-\delta)pc\sqrt\varepsilon n(1-(1-\delta)p)^i\big)!.
\]
Then we have $\log \nm(G,\sqrt\varepsilon)\geq (1-3\sqrt\varepsilon)n\log pn.$
\end{lemma}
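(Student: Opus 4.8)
The plan is to take logarithms, so that the two products become two sums, and to estimate each with elementary factorial inequalities, reserving sharper precision only where it matters. Throughout I set $q:=(1-\delta)p$, so the hypotheses read $c=1/q$, hence $cq=1$, together with $k=1/\sqrt\varepsilon-c$ and the identity $(1-q)^t=q$ (immediate from $t=\log q/\log(1-q)$). This last identity is what makes the tail geometric series collapse, and the relation $cq=1$ is the structural engine of the whole estimate.

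First I would handle the falling-factorial product. Writing $a_i:=qn(1-i\sqrt\varepsilon)$ and $b:=\sqrt\varepsilon n$, each factor satisfies $a_i!/(a_i-b)!=a_i(a_i-1)\cdots(a_i-b+1)\geq (a_i-b)^b$, so
\[
\log\prod_{i=0}^{k-1}\frac{a_i!}{(a_i-b)!}\geq \sqrt\varepsilon n\sum_{i=0}^{k-1}\log(a_i-b).
\]
One checks $a_i-b=n\big(q(1-i\sqrt\varepsilon)-\sqrt\varepsilon\big)\geq \sqrt\varepsilon n\,q>0$ for every $i\leq k-1$, the worst case $i=k-1$ being exactly where $cq=1$ forces $a_{k-1}-b=\sqrt\varepsilon n\,q$; thus every logarithm is legitimate. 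Separating $\log(a_i-b)=\log n+\log\!\big(q(1-i\sqrt\varepsilon)-\sqrt\varepsilon\big)$, the $\log n$ part contributes $k\sqrt\varepsilon n\log n=(1-c\sqrt\varepsilon)n\log n$, while the remaining sum has $k=O(1/\sqrt\varepsilon)$ bounded terms and so is $O(n)$ after multiplication by $\sqrt\varepsilon n$.

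Next the tail product. Because $cq=1$, the $i$-th factor simplifies to $\big(\sqrt\varepsilon n(1-q)^i\big)!$; writing $N_i:=\sqrt\varepsilon n(1-q)^i$ and using the elementary bound $\log N!\geq N\log N-N$ gives $\log\prod_{i=0}^{t-1}N_i!\geq \sum_{i=0}^{t-1}(N_i\log N_i-N_i)$. The geometric sum evaluates, via $(1-q)^t=q$, to $\sum_{i=0}^{t-1}N_i=\sqrt\varepsilon n(1-q)/q=c(1-q)\sqrt\varepsilon n$, so the $\log n$ part of $\sum N_i\log N_i$ equals $c(1-q)\sqrt\varepsilon n\log n$, and the residual pieces ($\tfrac12\log\varepsilon\cdot\sum N_i$, $\log(1-q)\sum iN_i$, and $-\sum N_i$) are all $O(\sqrt\varepsilon n)$ since $t$ is a constant depending only on $\varepsilon,p$.

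Adding the two estimates, the coefficient of $n\log n$ becomes
\[
(1-c\sqrt\varepsilon)+c(1-q)\sqrt\varepsilon=1-cq\sqrt\varepsilon=1-\sqrt\varepsilon,
\]
so $\log\nm(G,\sqrt\varepsilon)\geq(1-\sqrt\varepsilon)n\log n-O(n)$. Since the target $(1-3\sqrt\varepsilon)n\log pn$ equals $(1-3\sqrt\varepsilon)n\log n+O(n)$, the gap is $2\sqrt\varepsilon n\log n-O(n)$, which is nonnegative once $n>n_0(\varepsilon,p)$, giving the claim. I expect the main obstacle to be purely bookkeeping: controlling the accumulation of the $\Theta(1/\sqrt\varepsilon)$ lower-order constants from the first sum and confirming that all such errors, together with the factorial slack, fit inside the $2\sqrt\varepsilon n\log n$ budget. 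The one genuinely structural step is the pair of cancellations $cq=1$ and $(1-q)^t=q$, which together convert the naive $(1-c\sqrt\varepsilon)$ loss of the head phases into the required $(1-\sqrt\varepsilon)$ once the tail phases are accounted for.
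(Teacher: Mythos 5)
Your proof is correct and follows essentially the same route as the paper's: take logarithms, apply Stirling-type factorial bounds, sum the head phases as an arithmetic-type series and the tail phases as a geometric series via the identities $cq=1$ and $(1-q)^t=q$, extract the leading $n\log n$ coefficient, and absorb all constant-coefficient $O(n)$ terms using $n>n_0(\varepsilon,p)$. Your bookkeeping (cruder bounds like $a!/(a-b)!\geq(a-b)^b$ plus explicit $O(n)$ absorption) is a tidier packaging of the paper's explicit computation, but the underlying argument is the same.
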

\begin{proof}
We have
\begin{align*}
&\,\log\nm(G,\sqrt\varepsilon)\\
\geq&\,\sum_{i=0}^{k-1}(1-\delta)p(n-i\sqrt\varepsilon n)\log \big((1-\delta)p(n-i\sqrt\varepsilon n)\big)\\
&\, -\sum_{i=0}^{k-1}\big((1-\delta)p(n-i\sqrt\varepsilon n)-\sqrt\varepsilon n\big)\log\big((1-\delta)p(n-i\sqrt\varepsilon n)-\sqrt\varepsilon n\big)\\
&\, +\sum_{i=0}^{t-1}(1-\delta)pc\sqrt\varepsilon n(1-(1-\delta)p)^i\log\big((1-\delta)pc\sqrt\varepsilon n(1-(1-\delta)p)^i\big)\\
=&\,(1-\delta)p\frac{nk(1+c\sqrt\varepsilon)}{2}\log (1-\delta)pn+\sum_{i=0}^{k-1}(1-\delta)pn(1-i\sqrt\varepsilon)\log (1-i\sqrt\varepsilon)\\
&\,-\big((1-\delta)p\frac{nk(1+c\sqrt\varepsilon)}{2}-k\sqrt\varepsilon n\big)\log(1-\delta)pn\\
&\,-\sum_{i=0}^{k-1}\big((1-\delta)pn(1-i\sqrt\varepsilon)-\sqrt\varepsilon n\big)\log\Big((1-i\sqrt\varepsilon)-\frac{\sqrt\varepsilon}{(1-\delta)p}\Big)\\
&\,+\frac{1-(1-(1-\delta)p)^{t+1}}{(1-\delta)p}(1-\delta)pc\sqrt\varepsilon n\log(1-\delta)pn\\
&\,+\sum_{i=0}^{t-1}(1-\delta)pc\sqrt\varepsilon n(1-(1-\delta)p)^i\log\big(c\sqrt\varepsilon (1-(1-\delta)p)^i\big)\\
\geq&\, k\sqrt\varepsilon n\log (1-\delta)pn+(1-(1-\delta)p)c\sqrt\varepsilon n\log (1-\delta)pn)\\
&\, +\sum_{i=0}^{k-1}\sqrt\varepsilon n\log\big((1-i\sqrt\varepsilon)-c\sqrt\varepsilon\big)+\frac{1}{2}\sum_{i=0}^{t-1}\sqrt\varepsilon n(1-(1-\delta)p)^i\log\varepsilon\\
\geq&\,(1-2\sqrt\varepsilon)n\log(1-\delta)pn+k\sqrt\varepsilon n\log\sqrt\varepsilon+(c-1)\sqrt\varepsilon n\log\sqrt\varepsilon\\
=&\,(1-2\sqrt\varepsilon)n\log(1-\delta)\sqrt\varepsilon pn
>(1-3\sqrt\varepsilon)n\log pn.
\end{align*}
\end{proof}

\bibliographystyle{abbrv}
\bibliography{reference}
\end{document}